\documentclass[preprint]{elsarticle}
\usepackage{amsmath,amsthm,amssymb}
\usepackage{times}
\usepackage{enumerate}
\usepackage{url}
\usepackage{lineno}
\usepackage{microtype}
\usepackage[american]{babel}
\usepackage{indentfirst}
\usepackage{color}
\usepackage{geometry}
\usepackage{epsfig}
\newtheorem{result}{\textbf{Theorem}}
\newtheorem{proposition}{Proposition}[section]
\newtheorem{lemma}{Lemma}[section]
\newtheorem{definition}{Definition}[section]

\newtheorem{remark}{Remark}[section]
\newtheorem{example}{Example}[section]
\numberwithin{equation}{section}

\begin{document}
\baselineskip=15pt

\begin{frontmatter}   % 必须添加 frontmatter 环境

\title{Commutation Properties of Semi-groups for Contact Type Hamilton-Jacobi Equation}

\author{Guyu Jin}
\address{Graduate School of Mathematical Sciences, The University of Tokyo, 3 Chome-8-1 Komaba, Meguro City, Tokyo 153-8902, Japan}
\ead{guyu567@g.ecc.u-tokyo.ac.jp}   % 使用 \ead 命令代替 \email

 % 如果要填分类号，放在括号内

\begin{abstract}
We know that there exist semi-groups for contact type Hamilton-Jacobi equations, which refers to \cite{KLJ2}. Guy Barles and Agnès Tourin give a proof of the commutation properties for normal Hamilton-Jacobi equations at \cite{GA}. In this article, we provide a proof of the commutation property of semi-groups for contact type Hamilton-Jacobi equations.
\end{abstract}

\begin{keyword}
Contact type Hamilton-Jacobi equations, Viscosity solutions, commutative semi-groups, multi-time equations  % 如果有关键词，在这里填写
\end{keyword}

\end{frontmatter}

\section{Introduction}

For the Cauchy problems for contact type Hamilton-Jacobi equations, which is defined for $x \in \mathbb{R}^n$ and $t \geq 0$, we have the equation in this form:
\begin{equation}\label{HJ}\tag{HJ}
 \left\{
   \begin{aligned}
   & u_t(x,t)+H\Big(x,Du(x,t),u(x,t)\Big)=0\quad \textrm{in}\quad \mathbb R^n\times(0,\infty)
   \\
   &u(x,0)=u_0(x),\quad u_0\in \text{Lip}(\mathbb R^n)\cap BUC(\mathbb R^n),
   \\
   \end{aligned}
   \right.
\end{equation}

we have the Lax-Oleinik semi-group defined as
\begin{align}
    S_H(t) [u_0(x)]:=u(x,t).
\end{align}

The commutation properties for general Hamilton-Jacobi equations have been proved by Guy Barles and Agnès Tourin. They gave the result that the commutation properties for the semi-groups is equivalent to the solvability of the associated multi-time Hamilton-Jacobi equation in \cite{GA}.  By giving the commutative Lax-Oleinik semi-groups, Davini and Zavidovique examined the weak KAM theoretic aspects of the commutation property and showed that the two Hamiltonians have the same weak KAM solutions, the same Aubry set, and the same Peierls barriers in \cite{davini2013weak} and \cite{zavidovique2010weak}. In \cite{zavidovique2010weak}, Zavidovique systematically investigates the weak KAM theory for commuting Tonelli Hamiltonians. He proves that if two Tonelli Hamiltonians $G$ and $H$ Poisson commute, then their Lax–Oleinik semi-groups commute.

Contact Hamiltonian systems also represent the odd-dimensional counterpart of Hamiltonian systems in the context of contact geometry. In Appendix 4 of \cite{arnold1989mathematical}, Arnold provides a detailed introduction to the important concepts in contact dynamics. For the relationship between contact structures and first-order partial differential equations in general form, see his another work \cite{arnold2004lectures}. For the recent main development of contact type Hamilton-Jacobi equation, we can see \cite{cannarsa2020herglotz} \cite{wang2019aubry} \cite{wang2021weak} \cite{ishii2022hamilton} \cite{wang2023time} \cite{wang2025time} \cite{ni2024nonlinear} \cite{ni2025representation}.

In contact geometry, the notion of observables is governed by the Jacobi bracket, a natural generalization of the Poisson bracket to odd-dimensional manifolds. Introduced by Kirillov and Lichnerowicz, the Jacobi bracket arises from the contact form \(\theta\) via the Reeb vector field \(\xi\) and a bivector field \(\Lambda\), encoding the Lie algebra structure of smooth functions on a contact manifold. In particular, the subspace \(C_b^\infty(M) = \{f \in C^\infty(M) \mid \xi \cdot f = 0\}\) forms a Poisson subalgebra, which plays a central role in the geometric quantization of contact manifolds. Functions in this subalgebra correspond to infinitesimal symmetries preserving the contact form and serve as the classical observables that admit a consistent quantum representation. More details on Jacobi brackets are referred to \cite{SF}. And different from Zavidovique in \cite{zavidovique2010weak}, we will consider Jacobi commutation instead of Poisson commutation.

In this passage, we focus on Jacobi commutation and the contact type first-order Hamilton-Jacobi equations. To the best of our knowledge, this is the first work that systematically investigates commutation properties of contact type Hamilton–Jacobi equations from the point of view of Jacobi brackets. We will show the commutation properties for the semi-groups equipped with different contact Hamiltonians first. 

We will give the conditions for $C^1$ Hamiltonian $H$, $F$: $\mathbb R^n \times \mathbb R^n \times \mathbb R \mapsto \mathbb R$. $H$, $F$ satisfies 
\begin{itemize}
\item[(H1)] For each $R>0$, $\exists K_R >0$ s.t. $|H(x,p,u)| \leq K_R$ and $|D_pH(x,p,u) |\leq K_R(1+|x|)$ in $\mathbb R^n \times B(0,R) \times \mathbb R$ 
\item[(H2)] $\lim_{|p|\to+\infty}\inf_{x\in\mathbb R^n ,u \in \mathbb R}H(x,p,u)/|p|=+\infty$.
\item[(H3)] $p\mapsto H(x,p,u)$ is convex for each $(x,u)\in \mathbb R^n\times \mathbb R$.
\item[(H4)] $H(x,p,u)$ is uniformly Lipschitz in $u$, i.e. $ \exists C >0, s.t.|\frac{\partial H}{\partial u}(x,p,u)| \leq C$ for each $(x,p,u) \in \mathbb R^n\times \mathbb R^n\times \mathbb R$
\item[(H5)] $H$ and $F$ satisfies 
     \begin{align*}
          D_x H(x,p,u) D_pF(x,p,u)- D_p H(x,p,u) D_xF(x,p,u) + p \cdot D_pF(x,p,u) \cdot \frac{\partial H}{\partial u} 
          \\ - p \cdot D_pH(x,p,u) \cdot \frac{\partial F}{\partial u} + \frac{\partial F}{\partial u}(x,p,u) H(x,p,u)-\frac{\partial H}{\partial u}(x,p,u) F(x,p,u)=0,
     \end{align*}
     for every $(x,p,u) \in \mathbb R^n \times \mathbb R^n \times \mathbb R$
\end{itemize}

We first give the definition of the viscosity solution to (HJ).

\begin{definition}
    \item[(i)] We say that $u$ is a viscosity subsolution to (HJ), if for each $\phi \in C^1(\mathbb R^n \times (0,\infty))$ such that $u(x_0,t_0)=\phi(x_0,t_0)$, and $u-\phi$ attains its strict maximum at $(x_0,t_0) \in \mathbb R^n \times (0,\infty)$, then 
          \begin{align*}
              \phi_t(x_0,t_0)+H\Big(x,D\phi(x_0,t_0),\phi(x_0,t_0)\Big) \leq 0
          \end{align*}
    and $u(\cdot ,0) \leq u_0$;
    \item[(ii)] We say that $u$ is a viscosity supersolution to (HJ), if for each $\phi \in C^1(\mathbb R^n \times (0,\infty))$ such that $u(x_0,t_0)=\phi(x_0,t_0)$, and $u-\phi$ attains its strict minimum at $(x_0,t_0) \in \mathbb R^n \times (0,\infty)$, then 
          \begin{align*}
              \phi_t(x_0,t_0)+H\Big(x,D\phi(x_0,t_0),\phi(x_0,t_))\Big) \geq 0
          \end{align*}
    and $u(\cdot ,0) \geq u_0$;
    \item[(iii)] We say that $u$ is a viscosity solution to (HJ), if $u$ is both the viscosity subsolution and viscosity supersolution to (HJ).
\end{definition}

We have some propositions for the viscosity solutions.

\begin{proposition}
    Assume that $H$ satisfies (H1)-(H4), then there exists the unique Lipschitz viscosity solution of (HJ).
\end{proposition}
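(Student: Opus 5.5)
The plan is the standard two-part one: uniqueness from a comparison principle, and existence via Perron's method or the implicit variational (Herglotz) representation, with Lipschitz bounds obtained from barriers. Throughout, write $L$ for the Lipschitz constant of $u_0$ and $M$ for a bound on $\|u_0\|_\infty$.

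\emph{Uniqueness (comparison).} I will prove that if $u$ is a bounded Lipschitz subsolution and $v$ a bounded Lipschitz supersolution with $u(\cdot,0)\le v(\cdot,0)$, then $u\le v$ on $\mathbb R^n\times[0,T]$ for every $T$. The monotonicity in $u$ is handled by the change of unknown $\tilde u=e^{-\lambda t}u$, $\tilde v=e^{-\lambda t}v$ with $\lambda>C$, where $C$ is the constant of (H4). The new Hamiltonian $\lambda r+e^{-\lambda t}H(x,e^{\lambda t}p,e^{\lambda t}r)$ is then strictly increasing in $r$, so the problem becomes proper.

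Next I double variables. Consider
\[
\Phi(x,y,t,s)=\tilde u(x,t)-\tilde v(y,s)-\frac{|x-y|^2}{2\varepsilon}-\frac{|t-s|^2}{2\varepsilon}-\alpha\langle x\rangle-\frac{\eta}{T-t},
\]
where $\langle x\rangle=(1+|x|^2)^{1/2}$ localizes the problem, since the functions are bounded on the unbounded domain. Because $u$ and $v$ are Lipschitz, the penalized gradients $(x-y)/\varepsilon$ stay in a fixed ball $B(0,R)$. On that ball, (H1) gives $|D_pH|\le K_R(1+|x|)$, so $|H(x,p,r)-H(y,p,r)|$ needs control through $|x-y|$. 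The growth bound on $D_pH$ controls the error from the term $\alpha D\langle x\rangle$, since $(1+|x|)|D\langle x\rangle|$ is bounded. The $x$-regularity of $H$ is the delicate point: (H1) gives only boundedness, so I will use $H\in C^1$ together with the localization to compact $x$-sets supplied by the $\alpha\langle x\rangle$ term. This gives local uniform continuity in $x$ on $B(0,R)$. The usual argument then yields a contradiction as $\varepsilon\to0$, and afterwards $\alpha,\eta\to0$.

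\emph{Existence.} Use Perron's method. The barriers are $u^\pm(x,t)=u_0(x)\pm Kt$, with $K\ge\sup\{|H(x,p,r)|:|p|\le L,\ |r|\le M+KT\}$. This constant is finite by (H1), because the bound there is uniform in $r$. These barriers are a sub- and a supersolution with the correct initial data, and comparison then gives a unique continuous solution $u$ with $|u(x,t)-u_0(x)|\le Kt$. An alternative is to invoke the Herglotz variational representation from \cite{KLJ2} under (H2)--(H4).

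\emph{Lipschitz bounds.} The time-Lipschitz bound comes from comparing $u(\cdot,\cdot+h)$ with $u$. Both solve the equation, their initial data differ by at most $Kh$, and comparison with the $e^{Ct}$ factor from (H4) gives $|u(x,t+h)-u(x,t)|\le Khe^{Ct}$. Hence $|u_t|\le Ke^{CT}$ in the viscosity sense. Then $H(x,Du,u)\le Ke^{CT}$ in the viscosity sense, and the superlinearity (H2) bounds $|Du|$, since a bounded $u_t$ forces $p$ into a compact set. This turns into a spatial Lipschitz bound, using that a subsolution of $H(x,Du,u)\le c$ with coercive $H$ is Lipschitz.

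I expect the main obstacle to be the comparison step on the unbounded domain, because (H1) provides no uniform modulus of continuity of $H$ in $x$. Lipschitz continuity of $u$ and $v$ in $x$ will have to compensate, and I would first try to absorb the $x$-dependence by using the $C^1$ regularity of $H$ on the compact sets produced by the localization.
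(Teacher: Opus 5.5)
Your uniqueness argument is essentially sound, apart from a slip noted at the end. The existence part, however, is circular with respect to the comparison principle. You prove comparison only for \emph{Lipschitz} sub- and supersolutions, and Lipschitz continuity is used essentially: it is what confines $(x-y)/\varepsilon$ to a fixed ball, where (H1) and the $C^1$ regularity of $H$ on compact sets give the needed estimates. Both existence steps invoke comparison outside that class. Perron's method needs $W^*\le W_*$ for the semicontinuous envelopes of the Perron supremum $W$. Your time-regularity step compares $u(\cdot,\cdot+h)$ with $u+Khe^{Ct}$ when $u$ is only known to be continuous, and even that continuity was supposed to come from the Perron step. For non-Lipschitz functions the doubling argument does not close under (H1)--(H4). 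The ratio $|x-y|/\varepsilon$ can blow up, and nothing gives a bound of the form $|H(x,p,r)-H(y,p,r)|\le\omega(|x-y|(1+|p|))$ for large $|p|$. Coercivity cannot bound $p$ at the maximum point either, because the time penalization contributes the uncontrolled term $(t-s)/\varepsilon$.

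The paper breaks this circle by regularizing before running Perron. It works with $\hat H_k=\min\{k,\tilde H\}+\frac1k|p|^2$; by (H2) the capped part is identically $k$ for large $|p|$, so a comparison principle for semicontinuous functions is available (adapted from Lemma A.2 of \cite{ishii2022hamilton}). Perron then gives a continuous $u^k$ between the barriers, and comparison for $\hat H_k$ gives $u^k_t\ge -C_1e^{2\Lambda T}$. Coercivity of $\hat H_k$, uniform in $k$, upgrades this to $|u^k_t|+|Du^k|\le C_3$, and Arzel\`a--Ascoli plus stability give a Lipschitz solution. Only then is comparison among Lipschitz functions, which your argument does supply, used for uniqueness.

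You can also repair your own route without modifying $H$. For $w$ in the Perron class, take the function equal to $u^-$ for $t\le h$ and to $\max\{u^-,\,w(\cdot,\cdot-h)-2Khe^{Ct}\}$ for $t>h$. It is again in the class, by (H4) and because the shifted term lies below $u^-$ for $t<3h/2$. Hence $W(x,t+h)\ge W(x,t)-2Ke^{CT}h$ with no comparison at all. Every $(p,q)\in D^+W^*$ then has $q\ge-2Ke^{CT}$, so $|p|$ and $|q|$ are bounded by (H1)--(H2), and $W^*$ is Lipschitz. Your doubling argument only needs the \emph{subsolution} to be Lipschitz: comparing $\Phi$ at $(x,y,t,s)$ and $(y,y,t,s)$ gives $|x-y|/\varepsilon\le 2(L+\alpha)$. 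So it yields $W^*\le W_*$.

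Finally, the slip: $(1+|x|)\,|D\langle x\rangle|$ is not bounded (it grows like $|x|$), so the localization error $K_{R+1}\alpha(1+|x|)$ is not small for the reason you give. Either use $\alpha\log\langle x\rangle$, or show that $\alpha\langle x\rangle\to0$ at the maximum points as $\varepsilon\to0$ and then $\alpha\to0$.
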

    
This proposition referred to \cite{ishii2022hamilton} Theorem A.1. We will give the proof when $M$ is replaced by $\mathbb{R}^n$.

\begin{proof}
   We may assume that $T < \infty$. Choose a constant $C_1>0$ such that
   \begin{equation}
       |H(x, Du_0(x), u_0(x))| + |Du_0(x)|^2 \leq C_1 \quad \text{for all } x \in \mathbb R^n.
   \end{equation}
   Define the functions $f^\pm \in C^1(\mathbb R^n \times [0, T], \mathbb{R})$,by
   \begin{equation}
       f^+(x, t) = u_0(x) + C_1 \Lambda^{-1}(e^{\Lambda t} - 1) \quad \text{and } f^-(x, t) = u_0(x) - C_1 \Lambda^{-1}(e^{\Lambda t} - 1).
   \end{equation}
   Choose a constant \( C_2 > 0 \) so that \(-C_2 \leq f^- \leq f^+ \leq C_2\) on \( \mathbb R^n \times [0, T] \). 
   We define the new Hamiltonians $\tilde{H}, \tilde{H}_k, \hat{H}_k$, by setting
   \begin{align}
       \tilde{H}(x, p, u) = H(x, p, \max\{-C_2, \min\{C_2, u\}\}) \quad \text{for } (x, p, u) \in \mathbb R^n \times \mathbb R^n \times \mathbb{R},\\
       \tilde{H}_k(x, p, u) =\min\{k, \tilde{H}(x, p, u)\} \quad \text{for } (x, p, u) \in  \mathbb R^n \times \mathbb R^n \times \mathbb{R}, \quad k \in \mathbb{N}\\
       \hat{H}_k(x, p, u) = \tilde{H}_k(x, p, u) + \frac{1}{k} |p|^2 \quad \text{for } (x, p, u) \in  \mathbb R^n \times \mathbb R^n \times \mathbb{R}, \quad k \in \mathbb{N}.
   \end{align}
   It is easily seen that the functions $\tilde{H}(x, p, u), \tilde{H}_k(x, p, u), \hat{H}_k(x, p, u)$ are Lipschitz continuous in $u$, with $\Lambda$ as a Lipschitz bound, that for any $f \in C^1(\mathbb R^n \times (0, T), \mathbb{R})$if \( |f| \leq C_2 \) on \(\mathbb R^n  \times (0, T) \), then,
\[
\tilde{H}(x, D f(x, t), f(x, t)) = H(x, D f(x, t), f(x, t)) \quad \text{for all } (x, t) \in \mathbb R^n \times (0, T),
\]
$|\tilde{H}_k(x, Du_0(x),u_0(x))| \leq |\tilde{H}(x, Du_0(x), u_0(x))|$ and that \( |\hat{H}_k(x, D u_0(x), u_0(x))| \leq C_1 \) for all \( x \in \mathbb R^n \) and \( k \in \mathbb{N} \).
Compute that

\[f_t^+ + H(x, Df^+, f^+) \geq C_1 e^{\Lambda t} + \hat{H}_k(x, Du_0, u_0) - C_1(e^{\Lambda t} - 1) \geq 0,\]

to see that for any \( k \in \mathbb{N} \), \( f^+ \) is a classical supersolution of
\begin{equation}
    u_t + \hat{H}_k(x, D u, u) = 0 \quad \text{in } \mathbb R^n  \times (0, T).\label{Hk}
\end{equation}

Similarly, we find that for any \( k \in \mathbb{N} \), \( f^- \) is a classical subsolution of (\ref{Hk}) . With \cite{ishii2022hamilton} Lemma A.2 and some modifications on the proof, we have the comparison principle for this type of equation. Moreover, note that \( f^-(x, 0) = f^+(x, 0) = u_0(x) \) for all \( x \in M \) and \( f^- \leq f^+ \) on \( \mathbb R^n \times [0, T] \). The Perron method yields a Crandall-Lions viscosity solution of (\ref{Hk}). That is,
\[u^k(x, t) = \sup\{f(x, t) : f  \text{ is a viscosity subsolution of ($\ref{Hk}$)}, f^- \leq f \leq f^+\text{ on }  \mathbb R^n \times (0, T)\}\]
Since \( f^\pm(x, 0) = u_0(x) \) for all \( x \in  \mathbb R^n \), we may extend the domain of \( u^k \) to \(  \mathbb R^n \times [0, T] \).
We note that, thanks to (H2), \(\tilde{H}_k(x,p,u) = k\) if \(|p|\) is sufficiently large and, hence, the function \(\tilde{H}_k\) is bounded and uniformly continuous on \(\mathbb R^n \times \mathbb R^n \times \mathbb{R}\). Since the upper and lower semi-continuous envelopes \((u_k)^*\) and \((u_k)_*\) are, respectively, a viscosity subsolution and supersolution of (\ref{Hk}), we find by the comparison principle  that \((u_k)^* \leq (u_k)_*\) on \(\mathbb R^n \times [0,T)\), which implies that \(u_k \in C(\mathbb R^n \times [0,T),\mathbb{R})\).

We now show that the family \(\{u^k\}_{k \in \mathbb{N}}\) is equi-Lipschitz continuous on \(\mathbb R^n \times (0,T)\). For this, we show first that the functions \(\tilde{H}_k\), with \(k \in \mathbb{N}\), are coercive uniformly in \(k\). That is, for any \(R > 0\) there exists \(Q > 0\), chosen independently of \(k\), such that for any \(k \in \mathbb{N}\),
\begin{equation}
    \hat{H}_k(x,p,u) > R \quad \text{if } |p| > Q.\label{10}
\end{equation}
Indeed, when \(R > 0\) is fixed, by (H2) we may choose \(Q \geq R\) so that \(\tilde{H}(x,p,u) > R\) if \(|p| > Q\). Using the inequality
\[R < k + \frac{1}{k}R^2,\]

we find that if \(|p| > Q\), then
\[\hat{H}_k(x,p,u) \geq \min\left\{k + \frac{1}{k}Q^2, \tilde{H}(x,p,u)\right\} > R.\]
Fix \(h > 0\) sufficiently small. Since \(u^k \geq f^{-}\) on \(\mathbb R^n \times [0,T)\), we find that
\[u^k(x,h) \geq u_0(x) - C_1\Lambda^{-1}(e^{\Lambda h} - 1) \geq u_0(x) - C_1e^{\Lambda T}h \quad \text{for all } x \in \mathbb R^n.\]
Setting \(v(x,t) = u^k(x,t) - C_1he^{\Lambda(T+t)}\) for \((x,t) \in \mathbb R^n \times [0,T)\), we easily observe that \(v\) is a viscosity subsolution of (\ref{Hk}). We apply comparison principle, to obtain the inequality \(v(x,t) \leq u(x,h+t)\) for all \((x,t) \in \mathbb R^n \times [0,T-h)\), which shows that
\[\liminf_{h \to 0+} \frac{u^k(x,t+h) - u^k(x,t)}{h} \geq -C_1e^{2\Lambda T}.\]
This assures that for any \((x,t) \in  \mathbb R^n \times (0,T)\) and \((p,q) \in D^+u^k(x,t)\), \(q \geq -C_1e^{2\Lambda T}\) and
\[|q| + \hat{H}_k(x,p,u^k(x,t)) \leq q + \hat{H}_k(x,p,u^k(x,t)) + 2C_1e^{2\Lambda T}.\]
Thus, \(u^k\) is a viscosity subsolution of
\begin{equation}
    |u_t| + \hat{H}_k(x,Du,u) - 2C_1e^{2\Lambda T} = 0 \quad \text{in }  \mathbb R^n \times (0,T),
\end{equation}
combining with (\ref{10}), we have that there exists a constant $C_3>0$, such that
\[|u^k_t| + |Du^k| \leq C_3 \quad \text{in } \mathbb R^n \times (0,T) \quad \text{in the viscosity sense}.\]
This shows that \(\{u^k\}_{k \in \mathbb{N}}\) is equi-Lipschitz continuous on \(\mathbb R^n \times [0,T)\). Recalling that \(f^- \leq u^k \leq f^+\) on \(\mathbb R^n \times [0,T)\) for all \(k \in \mathbb{N}\), we find by the Ascoli-Arzela theorem that the family \(\{u^k\}_{k \in \mathbb{N}}\) has a subsequence, converging to some \(u\) in \(C(\mathbb R^n \times [0,T),\mathbb{R})\). Since \(\{\hat{H}_k\}_{k \in \mathbb{N}}\) converges to \(\tilde{H}\) in \(C(\mathbb R^n \times \mathbb R^n\times \mathbb{R},\mathbb{R})\), we find that \(u\) is a viscosity solution of \(u_t + \tilde{H}(x,Du,u) = 0\) in \(\mathbb R^n \times (0,T)\). It is obvious that \(|u| \leq C_2\) on \(\mathbb R^n \times [0,T)\), which implies that \(u\) is a viscosity solution of \(u_t + H(x,Du,u) = 0\) in \(\mathbb R^n \times (0,T)\), \(u\) is Lipschitz continuous on \(\mathbb R^n \times [0,T)\), and \(u(x,0) = u_0(x)\) for all \(x \in \mathbb R^n\). Thus, \(u\) is a Lipschitz continuous solution of (\ref{HJ}). The uniqueness is from the comparison principle.  And since existence is a local property, we can easily extend the result to \(\mathbb R^n \times (0,\infty)\).
\end{proof}

\begin{definition}
    Assume that $H$ satisifes (H2). We say the $H^*$ is the Legendre transform of $H$, if
        \begin{align*}
            H^*(x,p,u)= \underset{v \in \mathbb R^n}{sup} \{v\cdot p- H(x,v,u)\},
        \end{align*}
\end{definition}
where $(x,p,u) \in \mathbb R^n \times \mathbb R^n \times \mathbb R$.

Since $H$ satisfies (H2), $H^*$ is finite, well-defined and also satisfies (H2).

In \cite{KLJ2}, we introduce an implicit variational principle for the Hamilton-Jacobi equation (HJ), which is stated as follows.
\begin{proposition}[Implicit Variational Principle]
    For any $x_0 \in \mathbb R^n$ and $u_0 \in \mathbb R$, there exists a continuous function $h_{x_0,u_0}$ defined on $\mathbb R^n \times (0,\infty)$ satisfying
        \begin{equation}
            h_{x_0,u_0}(x,t)= u_0 + \underset{\gamma (t)=x, \gamma (0)=x_0}{\inf} \int ^t _0 H^*(\gamma (\tau), \overset{\cdot}{\gamma }(\tau), h_{x_0,u_0}(\gamma (\tau),\tau)) d\tau ,\label{picard}
        \end{equation}
    where the infimum is taken among the Lipschitz continuous curves $\gamma:[0,t] \rightarrow \mathbb R^n $ and can be achieved.\label{IVP}
\end{proposition}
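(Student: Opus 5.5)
The plan is to obtain $h_{x_0,u_0}$ as a fixed point of the map defined by the right-hand side of (\ref{picard}), using a Picard-type iteration with a weighted norm, in the spirit of \cite{KLJ2}. Fix $T>0$ and work on $\mathbb R^n\times(0,T]$. For a continuous function $\varphi$ on $\mathbb R^n\times(0,T]$, bounded from below in the relevant region, define
\begin{align*}
\mathcal T[\varphi](x,t)=u_0+\inf_{\gamma(t)=x,\ \gamma(0)=x_0}\int_0^t H^*\big(\gamma(\tau),\dot\gamma(\tau),\varphi(\gamma(\tau),\tau)\big)\,d\tau .
\end{align*}
The claim is then that $\mathcal T$ has a unique fixed point, and that the infimum defining $\mathcal T$ at the fixed point is attained.

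The first step is to record the properties of $H^*$. By (H2) and (H3), $H^*$ is finite, convex and superlinear in the velocity, uniformly in $(x,u)$. By (H4), $H^*$ is Lipschitz in $u$ with the same constant $C$: for fixed $v$, the map $u\mapsto \sup_p\{p\cdot v-H(x,p,u)\}$ is a supremum of $C$-Lipschitz functions. By (H1), $H^*(x,v,u)\ge -\sup_{x,u}H(x,0,u)\ge -K_1$, so $H^*$ is bounded below. From these bounds I will show that $\mathcal T[\varphi]$ is well defined and finite whenever $\varphi$ is, with $\mathcal T[\varphi](x,t)\ge u_0-K_1t$. I will then show that $\mathcal T[\varphi]$ is continuous on $\mathbb R^n\times(0,T]$, by concatenating near-optimal curves with short straight segments, which gives local Lipschitz bounds in $x$ and $t$ away from $t=0$.

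Next comes the contraction estimate. For two functions $\varphi,\psi$ and any curve $\gamma$, (H4) gives
\begin{align*}
\Big|\int_0^t H^*(\gamma,\dot\gamma,\varphi)-H^*(\gamma,\dot\gamma,\psi)\,d\tau\Big|\le C\int_0^t \|\varphi(\cdot,\tau)-\psi(\cdot,\tau)\|_\infty\,d\tau .
\end{align*}
Taking infima yields
\begin{align*}
\|\mathcal T\varphi(\cdot,t)-\mathcal T\psi(\cdot,t)\|_\infty\le C\int_0^t\|\varphi(\cdot,\tau)-\psi(\cdot,\tau)\|_\infty\,d\tau .
\end{align*}
In the weighted norm $\sup_t e^{-2Ct}\|\cdot\|_\infty$ this makes $\mathcal T$ a contraction with constant $1/2$. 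Banach's theorem, applied on the complete metric space of continuous functions $\varphi$ with $\varphi-u_0$ finite in this norm, produces a unique fixed point $h_{x_0,u_0}$. Uniqueness on each $[0,T]$ makes the fixed points consistent as $T$ grows, so they define $h_{x_0,u_0}$ on all of $\mathbb R^n\times(0,\infty)$.

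I expect the main obstacle to be the functional setting. The value of $\mathcal T[\varphi]$ grows unboundedly as $|x-x_0|\to\infty$ or $t\to0^+$, like $t\,L^*(|x-x_0|/t)$, so $\sup_x$ norms of the iterates themselves are infinite. Only differences are controlled, so I must work in the affine space $u_0+(\text{reference})+\text{bounded}$. To handle this, I will take as reference the function $\mathcal T[u_0]$ and check that $\mathcal T$ maps $\mathcal T[u_0]+BC$ into itself using the difference estimate above.

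The last step is attainment of the infimum, via Tonelli's direct method. Superlinearity of $H^*$ bounds the $L^1$ norm of $\dot\gamma$ along minimizing sequences, and $H^*$ bounded below gives equi-integrability, hence weak $W^{1,1}$ compactness. Convexity in $v$ together with continuity of $(x,\tau)\mapsto h(x,\tau)$ gives lower semicontinuity of the action. The resulting minimizer is absolutely continuous; to obtain a Lipschitz minimizer I will rely on the standard regularity of minimizers, or accept absolutely continuous curves as in \cite{KLJ2}. This regularity step is delicate because $H$ is only $C^1$, so no Euler–Lagrange equation is available, and I would defer to the argument of \cite{KLJ2} there.
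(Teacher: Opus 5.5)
The paper gives no proof of its own here. It defers to \cite{KLJ2} and \cite{KLJ}, which treat a compact manifold under Tonelli hypotheses. So your proposal has to stand on its own, and its central step fails: there is no complete metric space in your setup that $\mathcal T$ maps into itself.

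Your first space ($\varphi-u_0$ bounded) contains none of the iterates, since every $\mathcal T\varphi$ grows superlinearly in $|x-x_0|$. For the second, your own difference estimate shows that $\mathcal T$ maps $\varphi^*+BC$ into itself if and only if $\mathcal T\varphi^*-\varphi^*$ is bounded. That is, $\varphi^*$ must already be a fixed point up to a bounded error.

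For $\varphi^*=\mathcal T[u_0]$ the estimate cannot certify this. Comparing with $\psi\equiv u_0$ gives $C\int_0^t\sup_y|\mathcal T[u_0](y,\tau)-u_0|\,d\tau=+\infty$. Indeed $\mathcal T[u_0](\cdot,\tau)$ is unbounded in $y$, and even over a bounded set of $y$ its supremum is of order $1/\tau$ as $\tau\to0^+$ for quadratic growth.

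Worse, the claim is false under (H1)--(H4). Take $n=1$, $x_0=0$, $u_0=0$ and $H(x,p,u)=\tfrac12p^2+\sigma\bigl(\sqrt{1+p^2}-\sqrt{1+u^2}\bigr)$ with $\sigma(s)=\tfrac12\bigl(s+\sqrt{1+s^2}\bigr)$. This $H$ is smooth, convex in $p$, and satisfies (H1)--(H4). One has $H^*(v,0)\le\tfrac12(|v|-1)^2+1$, but $H^*(v,u)\ge\tfrac12v^2-\tfrac12$ whenever $|u|\ge|v|$. Near-optimal curves ending at $(x,1)$ stay $O(\sqrt{|x|})$-close to the segment, and $\mathcal T[u_0](y,\tau)\ge(|y|-\tau)^2/(2\tau)-\tau/2$. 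Together these give $\mathcal T[\mathcal T[u_0]](x,1)-\mathcal T[u_0](x,1)\ge|x|-o(|x|)$, and likewise $h-\mathcal T[u_0]$ is unbounded. So the function you seek is not in $\mathcal T[u_0]+BC$ at all.

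The underlying reason is that the oscillation of $u\mapsto H^*(x,v,u)$ may grow with $|v|$, so a sup-norm contraction on $\mathbb R^n$ is the wrong tool. You need a further idea. One option is curve-wise (not sup-norm) estimates on the class cut out by the envelope $u_0-K_0t\le\mathcal T\varphi(x,t)\le u_0+t\,\theta(|x-x_0|/t)$, where $\theta(r)=\sup\{H^*(y,v,u):|v|\le r\}$; every $\mathcal T\varphi$ satisfies this envelope. Another is a different construction of $h$, e.g.\ via Herglotz's variational principle.

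A second, smaller gap is attainment among Lipschitz curves, which you defer. The direct method only yields an absolutely continuous minimizer. You still have to show that the infimum over Lipschitz curves equals the one over absolutely continuous curves, and that some minimizer is Lipschitz. With $H$ merely $C^1$ and convex, there is no Euler--Lagrange equation or contact flow to fall back on; that is how \cite{KLJ2} gets regular minimizers, under Tonelli assumptions. Moreover, the time-dependent Lagrangian $(\tau,y,v)\mapsto H^*(y,v,h(y,\tau))$ loses all Lipschitz control in $(\tau,y)$ as $\tau\to0^+$. So an actual regularity argument is required, e.g.\ of Clarke--Vinter type on $[\delta,t]$ plus a separate treatment near $\tau=0$.
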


With this proposition, we can show the following theorem:

\begin{result}\label{thm1}
    There is a semi-group of the operator $\{S_H(t)\}_{t \geq 0}: C(\mathbb R^n,\mathbb R) \rightarrow C(\mathbb R^n,\mathbb R)$, such that for each $u_0 \in  \text{Lip}(\mathbb R^n)\cap BUC(\mathbb R^n)$, $S_H(t)[u_0(x)]$ is the unique viscosity solution of equation (HJ). Moreover, we have 
        \begin{align}
            S_H(t)[u_0(x)]= \underset{y \in \mathbb R^n}{\inf}\{h_{y,u_0(y)}(x,t)\}, \quad\forall(x,t) \in \mathbb R^n \times [0,\infty)
        \end{align}
    where $h$ is the implicit action function we defined in proposition 1.2. $\{S_H(t)\}_{t \geq 0}$ can also be called Lax-Oleinik semi-group.
\end{result}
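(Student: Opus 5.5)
We define $S_H(t)$ directly by the formula in the statement,
\[
S_H(t)[u_0](x) := \inf_{y\in\mathbb R^n} h_{y,u_0(y)}(x,t),
\]
with $S_H(0)=\mathrm{id}$. The proof then has three parts: the formula gives the viscosity solution, the operators form a semi-group, and the operators map $C(\mathbb R^n,\mathbb R)$ into itself. The main tools are Proposition 1.1 (existence and uniqueness of the Lipschitz viscosity solution) and Proposition \ref{IVP}. We will also use the standard properties of the implicit action function from \cite{KLJ2}, which follow from (\ref{picard}) together with (H4):
\begin{itemize}
\item[(a)] monotonicity in $u_0$: $h_{x_0,u}\le h_{x_0,u'}$ whenever $u\le u'$, with $|h_{x_0,u}-h_{x_0,u'}|\le e^{Ct}|u-u'|$;
\item[(b)] the Markov (minimality) property: $h_{x_0,u_0}(x,t+s)=\inf_{z}h_{z,h_{x_0,u_0}(z,t)}(x,s)$;
\item[(c)] local Lipschitz continuity of $(x_0,x,t)\mapsto h_{x_0,u_0}(x,t)$ for $t>0$, together with the growth $h_{x_0,u_0}(x,t)\to+\infty$ as $t\to0^+$ for $x\ne x_0$, which follows from the superlinearity of $H^*$ inherited from (H2).
\end{itemize}

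\textbf{Step 1 (the formula is the viscosity solution).} Set $u(x,t)=\inf_y h_{y,u_0(y)}(x,t)$.

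First we check that $u$ is finite and continuous, and that $u(\cdot,t)\to u_0$ as $t\to0$. The choice $y=x$ with the constant curve gives an upper bound. The superlinear growth of $H^*$ in (c) localizes the infimum to $|y-x|\le R(t)$ with $R(t)\to0$. The Lipschitz bound on $u_0$ then gives $|u(x,t)-u_0(x)|\le Ct$.

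Next, $u$ solves the equation. Fix $(x_0,t_0)$. By (b), $u(x,t_0+s)=\inf_z h_{z,u(z,t_0)}(x,s)$, so $u$ satisfies a dynamic programming principle. Test it against $C^1$ functions $\phi$ touching from above and from below.
\begin{itemize}
\item Subsolution: compare with the constant-velocity curve $\gamma(\tau)=x_0-(t_0-\tau)v$ and use the Lipschitz dependence on $u$ from (H4). This gives $\phi_t+v\cdot D\phi-H^*(x_0,v,\phi)\le0$ for all $v$, and hence $\phi_t+H(x_0,D\phi,\phi)\le0$ by convexity (H3) and Fenchel duality.
\item Supersolution: use a minimizing curve provided by Proposition \ref{IVP}, whose velocity is bounded by (H1)–(H2), and pass to the limit along it.
\end{itemize}
By the uniqueness part of Proposition 1.1, $u=S_H(t)[u_0]$ is the unique viscosity solution.

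\textbf{Step 2 (semi-group property).} We need $S_H(t+s)=S_H(s)\circ S_H(t)$. There are two routes.
\begin{itemize}
\item Via uniqueness: $w(x,s):=S_H(t+s)[u_0](x)$ solves (HJ) with initial datum $S_H(t)[u_0]$. This datum is Lipschitz and BUC by Proposition 1.1, so uniqueness gives $w(\cdot,s)=S_H(s)[S_H(t)u_0]$.
\item Directly from the formula: combine (b) with the monotonicity (a) and exchange the infima,
\[
\inf_y\inf_z h_{z,h_{y,u_0(y)}(z,t)}(x,s)=\inf_z h_{z,\inf_y h_{y,u_0(y)}(z,t)}(x,s).
\]
Passing the infimum over $y$ inside the subscript needs continuity of $h$ in its initial value, which is the second half of (a).
\end{itemize}

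\textbf{Step 3 (extension to $C(\mathbb R^n,\mathbb R)$).} For general continuous $u_0$ we keep the formula as the definition. The infimum may be $-\infty$, so finiteness has to be handled, or the space restricted, as the formula implicitly does. The semi-group identity from Step 2 still holds because the direct argument there only used (a) and (b).

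\textbf{Expected main obstacle.} The hardest step is the supersolution inequality and the localization of minimizers on the non-compact $\mathbb R^n$. Here the $x$-dependent bound $|D_pH|\le K_R(1+|x|)$ in (H1) must be combined with superlinearity to get a priori velocity bounds on minimizing curves. I would try to reproduce the compactness estimates of \cite{KLJ2} with Gronwall-type control along curves, using the uniform Lipschitz constant $C$ from (H4).
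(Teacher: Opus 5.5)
Your outline is sound, but it follows a different route from the paper. The paper never tests the formula against $C^1$ functions. Instead it:
\begin{enumerate}
\item realizes $u$ as the unique fixed point of the implicit Lax--Oleinik operator $A$ (Lemma 1.1, from \cite{KLJ});
\item identifies that fixed point with $\inf_y h_{y,u_0(y)}$ (Proposition 1.3);
\item shows it is a variational solution in the sense of Definition 1.3, with domination along every curve obtained by concatenating with a minimizer, and calibrated curves obtained from the Markov property;
\item cites Proposition 7.2.7 of \cite{AF} to conclude that it is a viscosity solution.
\end{enumerate}
The paper leaves the semi-group law implicit. You instead take the formula as the definition. You get the dynamic programming principle from (b) combined with the monotone, continuous dependence (a) on the initial value; note that (b) alone is not enough there. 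You then check the sub- and supersolution inequalities by hand with straight-line competitors and minimizers. In effect this re-proves the cited result of \cite{AF}, with the $u$-dependence of $H^*$ controlled by (H4). The fixed-point formulation makes domination a one-line concatenation argument, because the $u$-slot of the Lagrangian already contains the solution. Your route avoids the fixed-point machinery and the external citation, and it proves the semi-group law explicitly. The price is that it leans on finer properties (a)--(c) of $h$, which on $\mathbb R^n$ are just as much imported from \cite{KLJ2} as the paper's lemmas.

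There are two things to tighten.

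\textbf{Lipschitz regularity before uniqueness.} Uniqueness in Proposition 1.1 holds among Lipschitz solutions, while Step 1 only gives continuity and $|u-u_0|\le Ct$. Proceed as follows:
\begin{enumerate}
\item Prove the semi-group law by your direct route first. The route via uniqueness would be circular at this point.
\item Deduce from (a) that $\|S_H(t+s)u_0-S_H(t)u_0\|_\infty\le e^{Ct}\|S_H(s)u_0-u_0\|_\infty\le C's$.
\item Bound $|Du|$ through coercivity (H2) applied to $H(x,Du,u)=-u_t$, as in the proof of Proposition 1.1.
\end{enumerate}
Alternatively, invoke a comparison principle that needs only one of the two functions to be Lipschitz.

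\textbf{The domain $C(\mathbb R^n,\mathbb R)$.} Your doubt in Step 3 is justified. Take $H=\frac12|p|^2$, which satisfies (H1)--(H4), and $u_0(x)=-|x|^2$. Then the infimum is $-\infty$ for $t>\frac12$, so $S_H(t)$ does not map all of $C(\mathbb R^n,\mathbb R)$ into itself. The paper does not address this either, and the statement should be read for bounded uniformly continuous data.

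A minor slip: the uniform superlinearity of $H^*$ you use for localization comes from the bound $|H|\le K_R$ on $|p|\le R$ in (H1). (H2) only guarantees that $H^*$ is finite.
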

Before we prove this theorem, we need to introduce some definitions and lemmas.
\begin{definition}
    Let $T>0$. A function $u:\mathbb R^n \times [0,T] \to \mathbb R$ is called a variational solution of (HJ) if
    \item[(i)] for each continuous and piecewise $C^1$ curve $\gamma : [t_1,t_2]\to \mathbb R^n$ with $0\leq t_1< t_2\leq T$, we have
    \begin{equation}
        u(\gamma(t_2),t_2)-u(\gamma(t_1),t_1)\leq \int _{t_1}^{t_2}H^*(\gamma(\tau),\overset{\cdot}{\gamma}(\tau),u(\gamma(\tau),\tau))d\tau;
    \end{equation}
    \item[(ii)] for each $[t_1,t_2]\subset[0,T]$ and each $x\in \mathbb R^n$, there exists a $C^1$ curve $\gamma: [t_1,t_2]\to \mathbb R^n$ with $\gamma(t_2)=x$ such that
    \begin{equation}
        u(x,t_2)-u(\gamma(t_1),t_1)=\int _{t_1}^{t_2}H^*(\gamma(\tau),\overset{\cdot}{\gamma}(\tau),u(\gamma(\tau),\tau))d\tau.
    \end{equation}
\end{definition}
Refered to \cite{KLJ}, Lemma 4.1, we have:
\begin{lemma}
     Given $u_0 \in C(\mathbb R^n,\mathbb R)$ ,$T>0$ , we define a operator $A:C(\mathbb R^n \times [0,T] ,\mathbb R)\to C(\mathbb R^n \times [0,T] ,\mathbb R)$ by
     \begin{equation}
         A[u](x,t):=\underset{\gamma (t)=x}{\inf}\{u_0(\gamma(0)) +  \int ^t _0 H^*(\gamma (\tau), \overset{\cdot}{\gamma }(\tau),u(\gamma (\tau),\tau)) d\tau\},
     \end{equation}
     where the infimum is taken among the Lipschitz curves $\gamma :[0,t] \to \mathbb R^n$. Then $A$ admits a unique fixed point.\label{fix}
\end{lemma}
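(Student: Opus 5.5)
The plan is to obtain the fixed point of $A$ from the Banach contraction principle, applied on a short time interval and then continued in time. I will not use the implicit variational principle directly.

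\textbf{Reduction to a bounded setting.} Fix $u_0\in C(\mathbb R^n,\mathbb R)$ and $T>0$. The natural space is $X=C(\mathbb R^n\times[0,T],\mathbb R)$ with the sup norm, or a weighted variant if $u_0$ is unbounded. The first step is to check that $A$ maps $X$ into itself and is well defined, meaning the infimum is finite and continuous in $(x,t)$.

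Finiteness from below uses superlinearity (H2) of $H^*$ in the velocity, uniformly in $(x,u)$. Finiteness from above follows by testing with the constant curve $\gamma\equiv x$, which gives
\[
A[u](x,t)\le u_0(x)+\int_0^t H^*(x,0,u(x,\tau))\,d\tau .
\]
The boundedness in (H1) controls $H^*(x,0,\cdot)$ from above. Continuity in $(x,t)$ comes from the standard concatenation argument for Lax–Oleinik type formulas: modify a near-minimizing curve on a short final segment and use the superlinear bound to keep the cost of that segment small. If working with general continuous $u_0$ makes this delicate, I will first restrict to $u_0$ bounded and uniformly continuous, as in (HJ), and remark on the extension.

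\textbf{Contraction estimate.} By (H4), $H$ is $C$-Lipschitz in $u$. Taking the supremum over $v$ shows $H^*$ is also $C$-Lipschitz in $u$:
\[
|H^*(x,v,u)-H^*(x,v,w)|\le C|u-w|.
\]
Hence, for any fixed curve $\gamma$ with $\gamma(t)=x$,
\[
\Big|\int_0^t H^*(\gamma,\dot\gamma,u(\gamma,\tau))\,d\tau-\int_0^t H^*(\gamma,\dot\gamma,w(\gamma,\tau))\,d\tau\Big|\le C\int_0^t \|u(\cdot,\tau)-w(\cdot,\tau)\|_\infty\,d\tau .
\]
Taking infima over $\gamma$ on both sides preserves this bound. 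This gives
\[
\|A[u](\cdot,t)-A[w](\cdot,t)\|_\infty\le C\int_0^t\|u(\cdot,\tau)-w(\cdot,\tau)\|_\infty\,d\tau .
\]
Rather than restricting to $T<1/C$ and iterating, I will use the Bielecki norm
\[
\|u\|_\lambda=\sup_{x,t} e^{-\lambda t}|u(x,t)| \quad\text{with } \lambda=2C.
\]
In this norm the estimate yields $\|A[u]-A[w]\|_\lambda\le \tfrac12\|u-w\|_\lambda$ on the whole interval $[0,T]$.

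\textbf{Conclusion of the argument.} Since $X$ is complete in $\|\cdot\|_\lambda$ (an equivalent norm), Banach's fixed point theorem gives existence and uniqueness of the fixed point. If unbounded $u_0$ forces differences $u-w$ to be unbounded, I will restrict $A$ to the affine space $u_0+C_b$. Note that $A[u]-A[w]$ is automatically bounded by the displayed estimate whenever $u-w$ is bounded.

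\textbf{Main obstacle.} I expect the hardest step to be showing that $A[u]$ is continuous and finite under only the growth assumptions (H1)–(H2). This requires a uniform a priori bound on the Lipschitz constant of near-minimizing curves, coming from superlinearity of $H^*$, so that the concatenation argument controls the modulus of continuity. The contraction part itself is routine once (H4) is transferred to $H^*$.
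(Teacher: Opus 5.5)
The paper gives no proof of its own here (it only cites \cite{KLJ}, Lemma 4.1, which is a compact-manifold statement), and your argument is the standard contraction proof of that statement: you transfer the uniform $u$-Lipschitz bound (H4) from $H$ to $H^*$ and apply Banach's theorem in the Bielecki norm $\sup_{x,t}e^{-2Ct}|u(x,t)|$, which is correct and essentially the intended route; your restriction to bounded, uniformly continuous $u_0$ is in fact needed on $\mathbb R^n$ (for $H=\frac12|p|^2$ and $u_0(x)=-|x|^3$ one gets $A[u]=-\infty$ for $t>0$), and it costs nothing for uniqueness, since $H^*(x,v,u)\ge -H(x,0,u)\ge -K_1$ and $H^*(x,0,u)\le -\inf H=:K$ uniformly in $u$ by (H1)--(H2), so every continuous fixed point lies between $\inf u_0-K_1t$ and $\sup u_0+Kt$. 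The one thing to fix is your plan for the step you flagged: superlinearity of $H^*$ gives near-minimizing curves only equi-integrable velocities (a uniform modulus of absolute continuity), not a uniform Lipschitz bound, so continuity of $A[u]$ has to come from elsewhere --- upper semicontinuity from a small affine reparametrization of one fixed Lipschitz competitor, and lower semicontinuity from a Tonelli-type compactness and lower-semicontinuity argument for the convex action.
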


Now we denote $u$ as the fixed point in Lemma \ref{fix}, so with \cite{KLJ} propositon 4.3,  the following holds.
\begin{proposition}
    For each $u_0\in  C(\mathbb R^n,\mathbb R)$, we have that
    \begin{equation}
        u(x,t)=\underset{y\in \mathbb R^n}{\inf} h_{y,u_0(y)}(x,t).
    \end{equation}
\end{proposition}
\begin{lemma}
    $u(x,t)$ is a variational solution of (HJ).
\end{lemma}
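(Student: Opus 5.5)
The plan is to use only the fixed-point identity $u=A[u]$ from Lemma \ref{fix} and check the two conditions of the definition of variational solution. Throughout I write $L(\gamma,\dot\gamma,s)=H^*(\gamma,\dot\gamma,s)$ and, for a curve $\gamma$ on $[t_1,t_2]$,
\[
\int_{t_1}^{t_2}H^*(\gamma(\tau),\dot\gamma(\tau),u(\gamma(\tau),\tau))\,d\tau
\]
for its action along $u$. The central tool is a \emph{dynamic programming (semigroup) property}: for $0\le t_1<t_2\le T$,
\begin{equation*}
u(x,t_2)=\inf_{\gamma(t_2)=x}\Big\{u(\gamma(t_1),t_1)+\int_{t_1}^{t_2}H^*(\gamma(\tau),\dot\gamma(\tau),u(\gamma(\tau),\tau))\,d\tau\Big\}.
\end{equation*}

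The first step is to prove this identity from $u=A[u]$. For ``$\le$'', fix $\gamma$ on $[t_1,t_2]$ and $\varepsilon>0$. Choose an $\varepsilon$-minimizer $\eta$ on $[0,t_1]$ for $A[u](\gamma(t_1),t_1)=u(\gamma(t_1),t_1)$ and concatenate it with $\gamma$. The concatenation is admissible for $A[u](x,t_2)$. Because the integrand is evaluated along the \emph{same} fixed function $u$, the action is additive, so $u(x,t_2)\le u(\gamma(t_1),t_1)+\int_{t_1}^{t_2}\cdots+\varepsilon$. For ``$\ge$'', take any curve $\gamma$ on $[0,t_2]$ ending at $x$ and split its action at $t_1$. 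The part on $[0,t_1]$ together with $u_0(\gamma(0))$ is at least $A[u](\gamma(t_1),t_1)=u(\gamma(t_1),t_1)$; then take the infimum over $\gamma$. The fact that the fixed point makes the Lagrangian's $u$-argument a frozen function is exactly what turns the implicit principle into an ordinary additive action.

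Condition (i) of the definition follows immediately from the ``$\le$'' half, applied to piecewise $C^1$ curves, which are Lipschitz.

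Condition (ii) needs the infimum over $[t_1,t_2]$ to be attained by a $C^1$ curve, and I expect this to be the main obstacle. The frozen Lagrangian $\tilde L(x,v,\tau)=H^*(x,v,u(x,\tau))$ is convex and superlinear in $v$ by (H2)–(H3). The function $u$ is continuous, indeed Lipschitz, using the Lipschitz solution from the first Proposition together with the inf-representation from the preceding Proposition. Superlinearity and the boundedness of $u$ on compact sets give a priori bounds on minimizing sequences, and the bound from (H1) on $D_pH$ lets one confine curves to a compact region. Tonelli's direct method (weak compactness in $W^{1,1}$ plus lower semicontinuity of the convex action) then yields a Lipschitz minimizer; the initial term $u(\gamma(t_1),t_1)$ is continuous, so it passes to the limit. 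Alternatively, this step can be quoted from Proposition \ref{IVP}, where the infimum is said to be achieved.

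Upgrading the minimizer to $C^1$ is delicate, because $\tilde L$ is only Lipschitz in $(x,\tau)$ through $u$. Here I would first try the Du Bois-Reymond/Erdmann integrated Euler–Lagrange form. In that form $D_v\tilde L(\gamma,\dot\gamma,\tau)$ equals an absolutely continuous function up to an a.e. term. Strict convexity of $H^*$ in $v$, which follows from $H$ being $C^1$, makes $v\mapsto D_v\tilde L$ injective, so $\dot\gamma$ has a continuous representative. If that fails, I would fall back on the paper's relaxed reading that the minimizer is Lipschitz, or on the $C^1$ regularity of calibrated curves from \cite{KLJ}.
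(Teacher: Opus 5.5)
Your proposal is correct and is essentially the paper's argument. For (i) you concatenate a (near-)minimizer for $u(\gamma(t_1),t_1)$ with $\gamma$ via $u=A[u]$, and your use of $\varepsilon$-minimizers means no attainment is needed; for (ii) you rely on the Markov (dynamic-programming) property, which the paper only asserts as ``easy to show'' and which you actually prove. If you carry out your extra discussion of attainment and $C^1$ regularity, you will need the nonsmooth (Clarke) form of the Euler--Lagrange conditions, because $H^*(x,v,u(x,\tau))$ is only Lipschitz in $x$ and need not be differentiable in $v$ under (H3); you will also need to prove the Lipschitz bound on $u$ directly rather than take it from the first Proposition, since identifying $u$ with that solution is exactly what this lemma is used for.
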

\begin{proof}
    We need to check that $u$ satisfies (i) and (ii) in Definition 1.3.
    
    (i)Let $\gamma:[t_1,t_2] \to \mathbb R^n$ be a continuous piecewise curve $C^1$ curve. Let $\bar \gamma:[0,t_1]\to \mathbb R^n$be the minimizer of $u(\gamma(t_1),t_1)$. Consider a curve $\xi :[0,t_2]\to \mathbb R^n$ defined by
    \begin{equation}
        \xi(t) = 
        \begin{cases} 
        \tilde{\gamma}(t), & t \in [0, t_1], \\
        \gamma(t), & t \in (t_1, t_2].
        \end{cases}
    \end{equation}
    It follows that
    \begin{align*}
        &u(\gamma(t_2),t_2)-u(\gamma(t_1),t_1)\\
        &=\underset{\gamma_2(t_2)=\gamma(t_2)}{\inf}\{u_0(\gamma_2(0))+\int_0^{t_2}H^*(\gamma_2(\tau),\overset{\cdot}{\gamma_2 }(\tau),u(\gamma_2(\tau),\tau))d\tau\}\\
        &-\underset{\gamma_1(t_1)=\gamma(t_1)}{\inf}\{u_0(\gamma_1(0))+\int_0^{t_1}H^*(\gamma_1(\tau),\overset{\cdot}{\gamma_1 }(\tau),u(\gamma_1(\tau),\tau))d\tau\}\\
        &\leq u_0(\xi(0))+\int_0^{t_2}H^*(\xi(\tau),\overset{\cdot}{\xi }(\tau),u(\xi(\tau),\tau))d\tau-u_0(\bar \gamma(0))-\int_0^{t_2}H^*(\bar \gamma(\tau),\overset{\cdot}{\bar \gamma }(\tau),u(\bar \gamma(\tau),\tau))d\tau,
    \end{align*}
    which implies that 
    \begin{equation}
        u(\gamma(t_2),t_2)-u(\gamma(t_1),t_1)\leq \int _{t_1}^{t_2}H^*(\gamma(\tau),\overset{\cdot}{\gamma}(\tau),u(\gamma(\tau),\tau))d\tau;
    \end{equation}
    (ii) follows from the Markov property for $u$, which is easy to show.
\end{proof}
And with Proposition 7.2.7 in \cite{AF}, we know that $u$ is the viscosity solution of (HJ). So we complete the proof of Theorem 1.

The proof of proposition 1.2 and theorem 1 refer to \cite{KLJ}. The proof for $\mathbb R^n$ is just a extension for the situation on the compact connected set $M$. 

Then, we will provide the main results of this paper.

\begin{result}\label{thm2}
     Assume that Hamiltonian $H$ and $F$ satisfy (H1)-(H5), then semi-groups $S_H$ and $S_F$ commute, i.e. for each $\phi \in \text{Lip}(\mathbb R^n)\cap BUC(\mathbb R^n)$, 
     \begin{align}
         S_H(\lambda) \circ S_F(\mu)[\phi(x)]= S_F(\mu) \circ S_H(\lambda)[\phi(x)], \quad\forall x \in \mathbb R^n, \lambda \geq 0, \mu \geq 0.
     \end{align}
\end{result}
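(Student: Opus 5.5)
We follow the Barles--Tourin strategy \cite{GA}: deduce commutation from solvability of the associated multi-time problem
\begin{equation*}
\left\{
\begin{aligned}
&u_\lambda + H(x,D u,u)=0,\qquad u_\mu + F(x,Du,u)=0 \quad \text{in } \mathbb R^n\times(0,\infty)^2,\\
&u(x,0,0)=\phi(x).
\end{aligned}
\right.
\end{equation*}
We would prove that $w(x,\lambda,\mu):=S_H(\lambda)\circ S_F(\mu)[\phi](x)$ and $\tilde w(x,\lambda,\mu):=S_F(\mu)\circ S_H(\lambda)[\phi](x)$ are both Lipschitz viscosity solutions of this system with the same data on the coordinate axes. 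We would then conclude $w=\tilde w$ by a comparison argument for the system.

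\textbf{Step 1: smooth case.} Assume first that $\phi$, $H$ and $F$ are smooth enough that classical solutions exist on a short time interval. The characteristics of $u_t+H(x,Du,u)=0$ are given by the contact Hamiltonian vector field
\[
X_H:\ \dot x=D_pH,\quad \dot p=-D_xH-p\,\partial_uH,\quad \dot u=p\cdot D_pH-H.
\]
The left-hand side of (H5) is exactly the Jacobi bracket $\{H,F\}$. The standard identity $[X_H,X_F]=X_{\{H,F\}}$ then shows that the two contact flows $\Phi^H_\lambda$ and $\Phi^F_\mu$ commute, since $X_0=0$.

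The solution $S_H(\lambda)S_F(\mu)[\phi]$ is the graph of the $1$-jet of $\phi$ transported first by $\Phi^F_\mu$ and then by $\Phi^H_\lambda$. Hence commutation of the flows gives $w=\tilde w$ for short times, as long as the transported Legendrian submanifold stays a graph.

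A cleaner variant is to show directly that $v(x,\mu):=S_H(\lambda)[S_F(\mu)\phi](x)$ solves $v_\mu+F(x,Dv,v)=0$. Set $z:=v_\mu+F(x,Dv,v)$ and differentiate in $\lambda$. Using $v_\lambda=-H$ and (H5), $z$ satisfies the linear transport equation
\[
z_\lambda + D_pH\cdot Dz + \partial_uH\, z=0,\qquad z(\cdot,0,\mu)=0.
\]
Therefore $z\equiv0$. Uniqueness in Proposition 1.1, applied to $F$, then gives $v=S_F(\mu)S_H(\lambda)\phi$.

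\textbf{Step 2: passage to viscosity solutions.} Classical solutions break down, so we would proceed as in \cite{GA}.
\begin{enumerate}[(a)]
\item Show that $w$ is a viscosity solution of the $H$-equation in $\lambda$ (immediate from Theorem \ref{thm1}) and of the $F$-equation in $\mu$.
\item For the second claim, use the variational representation of Theorem \ref{thm1} together with the semigroup/Markov property. Prove the sub- and super-solution inequalities by a doubling-of-variables argument in $(\lambda,\mu)$.
\item Use the equi-Lipschitz bounds from the proof of Proposition 1.1 to localize, since the gradients are bounded.
\end{enumerate}
An alternative route is approximation: regularize $\phi$ (and, if needed, $H$ and $F$ while preserving (H5)), apply Step 1 on short time intervals, iterate using the semigroup property, and pass to the limit using the $L^\infty$-contraction (up to $e^{C t}$, from (H4)) of $S_H$ and $S_F$.

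\textbf{Main obstacle.} The hardest point is establishing that $w$ solves the $F$-equation in the viscosity sense, where classical characteristics cross. The $u$-dependence destroys the plain $L^\infty$-contraction, and (H5) is only an identity between derivatives of $H$ and $F$. Our first attempt would follow Barles--Tourin's lemma: prove that if $u$ solves the $H$-equation and $\psi$ is smooth with $\psi_\mu+F(x,D\psi,\psi)\le0$, then transporting $\psi$ by $S_H$ preserves this subsolution property. We would verify this at test-function level via the transport identity above, using convexity (H3) to handle the one-sided inequalities. The multiplicative factor $e^{C\lambda}$ from (H4) would be absorbed by a change of unknown $e^{-C\lambda}z$.
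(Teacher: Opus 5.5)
\textbf{There is a genuine gap: the whole content of Theorem 2 sits in your Step 2, and you do not give an argument there.}

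Your Step 1 is correct at the classical level. With $v_\lambda=-H(x,Dv,v)$ and $z=v_\mu+F(x,Dv,v)$, the $D^2v$ terms cancel. The source term in $z_\lambda+D_pH\cdot Dz+\partial_uH\,z$ is exactly minus the left-hand side of (H5), so it vanishes.

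The difficulty is Step 2. Proving that $w=S_H(\lambda)S_F(\mu)\phi$ is a viscosity solution of $w_\mu+F(x,Dw,w)=0$ is essentially the theorem itself. By comparison, $w$ is a subsolution iff $w(\cdot,\lambda,\mu+s)\le S_F(s)w(\cdot,\lambda,\mu)$, i.e.\ iff $S_H(\lambda)S_F(s)\psi\le S_F(s)S_H(\lambda)\psi$ with $\psi=S_F(\mu)\phi$. So item (b) restates the goal, and a ``doubling of variables in $(\lambda,\mu)$'' does not by itself produce the $F$-inequality.

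Your test-function lemma (``transport $\psi$ by $S_H$ and use the transport identity'') has no mechanism behind it. The identity needs $D^2v$, and $S_H(\lambda)\psi$ stops being $C^2$ after a time of order $1/\|D^2\psi\|$.

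The approximation route fails quantitatively:
\begin{enumerate}[(1)]
\item Beyond the classical lifetime the iterates are only Lipschitz (semiconcave), so each block must be re-mollified.
\item A Lipschitz function mollified at scale $\delta$ has $|D^2|\sim 1/\delta$, hence classical lifetime $O(\delta)$.
\item Reordering $S_H(\lambda)S_F(\mu)$ into $S_F(\mu)S_H(\lambda)$ in blocks of size $\delta$ then takes about $\lambda\mu/\delta^2$ adjacent swaps, each costing $O(\delta)$ in sup norm.
\item The total error is therefore of order $\lambda\mu/\delta$, which does not vanish.
\end{enumerate}
In addition, characteristics need $H,F\in C^2$ while only $C^1$ is assumed. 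Mollifying $H$ and $F$ separately does not preserve the nonlinear identity (H5).

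\textbf{The missing idea is the one the paper takes from Barles--Tourin.} Never work with the composition. Work instead with the single equation with Hamiltonian $H+\lambda F$, and compare its solution $v(x,t,\lambda)$ with the explicit inf-convolution
\[
v^\epsilon(x,t,\lambda)=\inf_{y\in\mathbb R^n}\Bigl\{v(y,t,\lambda)+t\epsilon\,F^*\Bigl(x,\tfrac{x-y}{t\epsilon},v(y,t,\lambda)\Bigr)\Bigr\},
\]
which requires no smoothness of $v$.

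At a point of differentiability of $v^\epsilon$, the paper writes $Dv^\epsilon$ and $\partial_t v^\epsilon$ through an element $(p,a)\in D^-v$ at the minimizer $\bar y$, where the equation holds with equality, and through the Legendre-dual pair $(q,\bar r)$. A first-order Taylor expansion of $G=H,F$ along a segment in $(x,p,u)$ shows that the $O(\epsilon)$ term is $\epsilon t$ times the (H5) expression. Hence $v^\epsilon$ is an $o(\epsilon)$-approximate sub- and supersolution of the equation with $H+(\lambda+\epsilon)F$.

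Convexity (H3) upgrades the a.e.\ inequality to a viscosity one. Comparison then gives $v(\cdot,t,\lambda+\epsilon)\le v^\epsilon(\cdot,t,\lambda)+t\,o(\epsilon)$, and analogously from the other side. This yields $\partial_\lambda v+tF(x,Dv,v)=0$ (Theorem 3).

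Commutation is then read off from the symmetry of the sum. One has $S_{\mu H+\lambda F}(1)\phi=S_F(\lambda)S_H(\mu)\phi$, and exchanging the roles of $H$ and $F$ gives $S_H(\mu)S_F(\lambda)\phi$ as well. The key point is that the (H5) cancellation uses only first-order information on $v$, which your transport identity cannot avoid needing.
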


Before we prove this theorem, we need to introduce some other theorems first.

\begin{result}\label{thm3}
     Assume that $H$ and $F$ satisfy (H1)(H2)(H4)(H5). $v(x,t,\lambda)$ is the unique Lipschitz continuous viscosity solution of (HJ) with Hamiltonian $G=H+\lambda F$ where $\lambda \geq 0$. Then
     \item[(i)] If $F$ satisfies (H3), for every $t \geq 0$, $v$ is a viscosity subsolution of 
        \begin{equation}
            \left\{
            \begin{aligned}
            & \frac{\partial \xi}{\partial \lambda} +tF(x,D\xi,\xi)=0 \quad in \quad \mathbb R^n \times (0,\infty),
            \\
            &\xi(x,0)=u_0(x) \quad in \quad \mathbb R^n.
            \end{aligned}
            \right.\label{1}
        \end{equation}
    \item[(ii)] If $F$ and $H$ satisfy (H3), then for each $t \geq 0$, $v$ is a viscosity solution of (\ref{1}).  
\end{result}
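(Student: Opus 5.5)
This is the contact analogue of Barles--Tourin's argument in \cite{GA}. Throughout I work with $v(x,t,\lambda)$ as a function of $(x,\lambda)$ with $t$ frozen; it solves
\[
v_t+H(x,Dv,v)+\lambda F(x,Dv,v)=0 .
\]
I start at the formal level. Suppose $v$ is $C^2$ and set
\[
w=v_\lambda+tF(x,Dv,v).
\]
Differentiating the equation in $\lambda$ gives
\[
(v_\lambda)_t+\big(D_pH+\lambda D_pF\big)\cdot Dv_\lambda+\big(H_u+\lambda F_u\big)v_\lambda+F=0 .
\]
Next I compute $\partial_t\big(tF(x,Dv,v)\big)$, using $v_t=-G$ and $Dv_t=-D_xG-D_pG\,D^2v-G_uDv$. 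Condition (H5) is precisely the vanishing of the Jacobi bracket $\{H,F\}$, and that is what makes the cross terms between $H$ and $F$ cancel. What remains is a linear transport equation:
\[
w_t+D_pG\cdot Dw+G_u\,w=0,\qquad w(x,0)=0 .
\]
Uniqueness for this equation gives $w\equiv 0$, which is the equation in (\ref{1}). The extra terms $p\cdot D_pF\,H_u$ and $F_uH$ in (H5) are exactly the contributions produced by $G_u\,Dv$ and by $v_t=-G$, so (H5) is the natural hypothesis here.

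To make this rigorous in the viscosity framework, I would not work with $w$ directly. Instead I would use the semi-group characterization, as in \cite{GA}. The subsolution claim (i) is equivalent to the one-sided estimate
\[
v(x,t,\lambda+h)\le S_{tF}(h)\big[v(\cdot,t,\lambda)\big](x)+o(h),
\]
locally uniformly. The plan is in three steps.

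\textbf{Step 1: the smooth case.} First mollify $u_0$ and replace $H,F$ by smooth approximations. These must still satisfy (H5), which I would secure by regularizing only when the data are already smooth or by restricting to a regime where classical solutions exist. For small $t$ the method of characteristics then gives $C^2$ solutions, and the computation above applies.

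\textbf{Step 2: the general case via convexity of $F$.} Here I use the convexity of $F$ from (H3), through the comparison principle (\cite{ishii2022hamilton} Lemma A.2) and the representation of $S_{tF}$ by the implicit variational principle (Proposition \ref{IVP}, Theorem \ref{thm1}). The idea is the following. Take a test function $\varphi$ touching $v(\cdot,t,\cdot)$ from above at $(x_0,\lambda_0)$. Perturb the initial data so that it touches, and propagate forward in $t$ with the $G$-flow. By stability of viscosity solutions the quantity $w$ computed for the test function is forced to be $\le 0$. Convexity is needed because the argument only gives an inequality, obtained through the Lax--Oleinik inf-representation of $S_F$; this matches the structure of the statement, where (i) requires only that $F$ be convex.

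\textbf{Step 3: the supersolution property (ii).} Here $H$ is also convex. I would rewrite $G=H+\lambda F$ as
\[
G=(H+\lambda_0F)+(\lambda-\lambda_0)F .
\]
Applying the argument of Step 2 symmetrically, with the roles of $\lambda$-increase and $\lambda$-decrease exchanged, gives the reverse inequality. Since $H+\lambda_0F$ satisfies (H5) with $F$, it also satisfies (H3), so the same argument goes through. Combining the two inequalities gives $v$ as a viscosity solution.

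The main obstacle is Step 2: passing from the formal transport identity to viscosity sub/supersolution inequalities without $C^2$ regularity. Classical solutions exist only up to the first shock time, so I expect to need a doubling-of-variables argument in $(x,\lambda)$ with $t$ as a parameter. The $u$-dependence, controlled through (H4), would be handled by Gronwall-type exponential weights $e^{\Lambda t}$ of the kind used in the proof of Proposition 1.1.
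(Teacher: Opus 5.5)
You have the formal computation right. With $G=H+\lambda F$ and $w=v_\lambda+tF(x,Dv,v)$, one gets $w_t+D_pG\cdot Dw+\partial_uG\,w=-t\,B(G,F)$, where $B(G,F)$ is the left side of (H5) with $H$ replaced by $G$. Since $B(F,F)=0$, this equals $-t\,B(H,F)=0$, and $w(\cdot,0)=0$. Your reduction of (i) to $v(x,t,\lambda+h)\le S_{tF}(h)[v(\cdot,t,\lambda)](x)+o(h)$ is also essentially what the paper proves.

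The gap is that nothing in the proposal actually establishes this estimate.

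\textbf{Step 1.} It cannot serve as the base of an approximation. Classical solutions of the $G$-equation exist only up to the first shock time. For a Lipschitz $u_0$ mollified at scale $\delta$, that time is controlled by $\|D^2u_0^\delta\|_\infty\sim\delta^{-1}$ and in general shrinks to $0$, so no fixed $t>0$ is reached. You also cannot restart, because at the end of that interval the solution is again only Lipschitz.

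\textbf{Step 2.} This is not an argument. A test function $\varphi(x,\lambda)$ touching $v(\cdot,t,\cdot)$ at a single time $t$ carries no information about $\partial_tv$ or $D^2v$, which are exactly what the transport identity uses. The phrase ``perturb the initial data so that it touches'' has no meaning, and stability does not force $\varphi_\lambda+tF(x,D\varphi,\varphi)\le 0$. Doubling variables in $(x,\lambda)$ is not available either, since no equation in $\lambda$ is yet known for $v$; that is what you are proving.

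\textbf{The missing idea} is the paper's Lemma 2.1. Do not differentiate $v$ in $\lambda$ at all. Instead, compare in $t$ at the fixed parameter $\lambda+\epsilon$, where comparison is available, against an explicit candidate built from $v(\cdot,\cdot,\lambda)$. That candidate is the one-step Hopf--Lax inf-convolution
\[
v^\epsilon(x,t,\lambda)=\inf_{y\in\mathbb R^n}\Big\{v(y,t,\lambda)+t\epsilon F^*\Big(x,\frac{x-y}{t\epsilon},v(y,t,\lambda)\Big)\Big\},
\]
which is a first-order version of $S_{tF}(\epsilon)[v(\cdot,t,\lambda)]$.

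Take a differentiability point of $v^\epsilon$ with minimizer $\bar y$, and write $v_2=v(\bar y,t,\lambda)$. One obtains $(p,a)\in D^-v(\bar y,t,\lambda)$ with $a+H(\bar y,p,v_2)+\lambda F(\bar y,p,v_2)=0$. One also gets the first-order relations
\[
\bar y=x-\epsilon tD_pF(x,\bar r,v_2),\qquad Dv^\epsilon=\bar r-\epsilon tD_xF(x,\bar r,v_2),\qquad \partial_tv^\epsilon=-\epsilon F(x,\bar r,v_2)+a\bigl(1-\epsilon t\,\partial_uF(x,\bar r,v_2)\bigr).
\]
These relations replace the second derivatives in your computation. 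A Taylor expansion then gives
\[
\partial_tv^\epsilon+H+(\lambda+\epsilon)F=\epsilon t\,B(H,F)+o(\epsilon)=o(\epsilon).
\]

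Comparison then yields $v(x,t,\lambda+\epsilon)\le v^\epsilon(x,t,\lambda)+t\,o(\epsilon)$. Choosing $y=x-\epsilon tq$ and taking the supremum over $q$ gives the subsolution inequality in $\lambda$. This is where convexity of $F$ enters, through $F=(F^*)^*$. The reverse comparison gives (ii). So (H5) is used at first order in $\epsilon$ on a merely Lipschitz function, not through a transport equation for $v_\lambda+tF$.
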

The proof of theorem 3 is left in the next section. Then we consider the multi-time Hamilton system, which is stated as follows:
\begin{equation}\tag{HJ1}
    \frac{\partial u}{\partial t_1} +H_1(x,Du,u)=0 \quad in \quad \mathbb R^n \times [0,\infty)^d,
\end{equation}
\begin{equation}\tag{HJ2}
    \frac{\partial u}{\partial t_2} +H_2(x,Du,u)=0 \quad in \quad \mathbb R^n \times [0,\infty)^d,
\end{equation}
\begin{equation*}
    ......
\end{equation*}

\begin{equation}\tag{HJd}
    \frac{\partial u}{\partial t_d} +H_d(x,Du,u)=0 \quad in \quad \mathbb R^n \times [0,\infty)^d,
\end{equation}
\begin{equation}\tag{HJ0}
    u(x,0,0,...,0)=u_0(x),\quad u_0\in \text{Lip}(\mathbb R^n)\cap BUC(\mathbb R^n).
\end{equation}

\begin{result}
    Assume that $H_1$, $H_2$,..., $H_d$ satisfy (H1)-(H4) and any pair $H_i$, $H_j$ $(i \neq j)$ satisfy (H5). Then there exists a unique viscosity solution of (HJ1)-(HJ2)-...-(HJd)-(HJ0) in $W^{1,\infty}(\mathbb R^n \times [0,T]^d)$ for each $T>0$.
\end{result}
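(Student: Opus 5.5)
The plan is to build the solution explicitly from the semi-groups. For $(t_1,\dots,t_d)\in[0,\infty)^d$ I set
\[
u(x,t_1,\dots,t_d):=S_{H_1}(t_1)\circ S_{H_2}(t_2)\circ\cdots\circ S_{H_d}(t_d)[u_0](x).
\]
The key input is Theorem \ref{thm2}. Since every pair $H_i,H_j$ satisfies (H1)--(H5), the operators $S_{H_i}(t_i)$ pairwise commute. Consequently the order in this composition is irrelevant.

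For existence, fix an index $i$ and freeze all times except $t_i$. Using commutativity, I rewrite
\[
u=S_{H_i}(t_i)[w],\qquad w:=\prod_{j\neq i}S_{H_j}(t_j)[u_0].
\]
By Theorem \ref{thm1} and the Lipschitz existence result (Proposition 1.1), each semi-group maps $\text{Lip}\cap BUC$ into itself. Hence $w\in\text{Lip}(\mathbb R^n)\cap BUC(\mathbb R^n)$, and $t_i\mapsto u$ is the viscosity solution of (HJi) in the variables $(x,t_i)$.

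Two points then need care.

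\textbf{Viscosity property in the full variables.} Being a solution for each frozen set of the other times must be upgraded to a viscosity solution of (HJi) in $\mathbb R^n\times(0,\infty)^d$. I would do this by the standard argument that test functions in $(x,t_1,\dots,t_d)$ can be handled one time variable at a time. If $u-\phi$ has a strict local maximum at $(x_0,\bar t)$, then perturbing $\phi$ by $|t_j-\bar t_j|^2$ for $j\ne i$ and freezing $t_j=\bar t_j$ gives a maximum of the restricted function. At that point $\partial_{t_i}\phi$ and $D\phi$ are unchanged.

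\textbf{Lipschitz bounds.} The solution must lie in $W^{1,\infty}(\mathbb R^n\times[0,T]^d)$. I would use the Lipschitz estimates for $S_{H}(t)$ from the proof of Proposition 1.1, namely $|u_t|+|Du|\le C_3$ with $C_3$ depending on $T$ and the Lipschitz and sup norms of the initial datum. These would be applied iteratively over the $d$ factors, using commutativity to put whichever semi-group is being differentiated outermost. Applying the bounds with initial data $w$ that are uniformly Lipschitz and bounded on $[0,T]^d$ gives uniform bounds in every $t_i$ and in $x$.

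For uniqueness, let $v$ be another $W^{1,\infty}$ viscosity solution with the same initial data. I would argue by induction on the number of nonzero time variables. On the slice $t_2=\dots=t_d=0$, $v(\cdot,t_1,0,\dots,0)$ solves (HJ1) with datum $u_0$. By the uniqueness in Proposition 1.1 (comparison), it equals $S_{H_1}(t_1)[u_0]$.

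Next, fix $t_1$ and vary $t_2$ with $t_3=\dots=0$. The function $v$ then solves (HJ2) with initial datum $S_{H_1}(t_1)[u_0]$, so
\[
v=S_{H_2}(t_2)S_{H_1}(t_1)[u_0].
\]
Continuing this way yields $v=u$.

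The delicate step in this induction is justifying that the restriction of a viscosity solution to a slice $\{t_j=\text{const}\}$ is still a viscosity solution in the remaining variables. Restriction is not automatic for viscosity solutions. Here I would exploit the Lipschitz regularity of $v$ in all variables together with convexity (H3). For Lipschitz functions and convex Hamiltonians, viscosity solutions coincide with a.e.\ solutions that are semiconcave-type, or equivalently with variational solutions in the sense of Definition 1.3. I would first try to verify properties (i) and (ii) of Definition 1.3 on the slice directly, using the a.e.\ equation and Fubini in the frozen variables. I expect this step to be the main obstacle.
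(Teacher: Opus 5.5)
\textbf{Circularity in the existence argument.} Your existence argument rests on Theorem~\ref{thm2}, but in the paper the logical order runs the other way. Theorem~\ref{thm2} is proved \emph{after} this statement and explicitly by means of it, together with Theorem~\ref{thm3}(ii). As written, citing it here is circular.

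The paper does not need commutativity for this result. It proceeds as follows.
\begin{itemize}
\item It takes the Lipschitz viscosity solution $U(x,s,t_1,\dots,t_d)$ of $U_s+\sum_i t_iH_i(x,DU,U)=0$ with $U(\cdot,0,t)=u_0$, and sets $u(x,t):=U(x,1,t)$.
\item For each $i$ it applies Theorem~\ref{thm3}(ii) with $F=H_i$, $\lambda=t_i$, $H=\sum_{j\neq i}t_jH_j$, and reads off (HJi) at $s=1$. This is admissible because the bracket in (H5) is bilinear.
\item The initial condition holds because $U_s=0$ when $t=0$.
\end{itemize}
In this order, commutativity is an output rather than an input.

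To make your route non-circular, you must first derive commutativity directly from Theorem~\ref{thm3}(ii). Set $v(\cdot,\lambda,\mu)=S_{\mu H+\lambda F}(1)[u_0]$. Theorem~\ref{thm3}(ii) at time $1$, the identity $S_{\mu H}(1)=S_H(\mu)$, and uniqueness give $v=S_F(\lambda)S_H(\mu)[u_0]$. Symmetrically, $v=S_H(\mu)S_F(\lambda)[u_0]$. This is essentially all the paper's proof of Theorem~\ref{thm2} uses.

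After that reordering, the rest of your existence part is sound:
\begin{itemize}
\item the composition formula;
\item the upgrade from slices to the full variables (restricting a full-variable test function to the slice suffices, since (HJi) contains no $\partial_{t_j}$ with $j\neq i$);
\item the iterated Lipschitz bounds.
\end{itemize}
These bounds also make explicit the $W^{1,\infty}$ estimate that the paper's proof leaves implicit.

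\textbf{Uniqueness.} The paper's proof does not address uniqueness at all, so your slicing induction is a real addition, and its structure is right. It shows that any solution equals $S_{H_d}(t_d)\cdots S_{H_1}(t_1)[u_0]$ without using commutativity. Combined with the paper's existence, it would itself reprove Theorem~\ref{thm2}.

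However, the tool you propose does not handle the step you flag. A Lipschitz a.e.\ solution of a convex equation is a viscosity subsolution but in general not a supersolution: $|x|$ solves $|u'|=1$ a.e. So you would need a semiconcavity bound on the slice, which you have no control over. Fubini also gives information only on almost every slice, not on the particular slice you need.

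The standard fix is penalization in the frozen variables. Suppose $v(\cdot,\bar t_j)-\psi$ has a strict local maximum at $(x_0,t_i^0)$, and take maximum points of $v-\psi-\epsilon^{-1}\sum_{j\neq i}|t_j-\bar t_j|^2$.
\begin{itemize}
\item These points converge to $(x_0,t_i^0,\bar t)$.
\item The penalty does not change $\partial_{t_i}$ or $D_x$ of the test function.
\item Continuity of $v$ and $H_i$ lets you pass to the limit.
\end{itemize}
The same argument works for minima.

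Your slices $\{t_2=\dots=t_d=0\}$, $\{t_3=\dots=t_d=0\}$, \dots\ lie on the boundary of $[0,\infty)^d$. There the penalized maximum points may fall on $\{t_j=0\}$, where no viscosity inequality is available. You must therefore also pass from the interior slices $t_j=\delta$ to $t_j=0$, using stability of viscosity solutions under the locally uniform convergence that your Lipschitz bound provides.
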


\begin{proof}
    Let $U(x,s,t_1,...,t_d)$ be the unique locally Lipschitz continuous viscosity solution of
    \begin{equation*}
        \frac{\partial U}{\partial s}+t_1H_1(x,DU,U)+...+t_dH_d(x,DU,U)=0 \quad in \quad \mathbb R^n \times (0,\infty)
    \end{equation*}
    \begin{equation*}
        U(x,0,t_1,...,t_d)=u_0(x) \quad in \quad \mathbb R^n
    \end{equation*}
    Then setting $u(x,t_1,...,t_d)=U(x,1,t_1,...,t_d)$.
    Applying Theorem 3 (ii) with $F=H_1$, $\lambda=t_1$ and $H=t_2H_2+...+t_dH_d$, for $s=1$, $u$ is a viscosity solution of  
    \begin{equation}
    \left\{
   \begin{aligned}
   & \frac{\partial u}{\partial t_1}(x,t)+H_1(x,Du(x,t),u(x,t))=0\quad \textrm{in}\quad \mathbb R^n\times(0,\infty)
   \\
   &u(x,0)=u_0(x),\quad in \quad \mathbb R^n,
   \\
   \end{aligned}
   \right.
\end{equation}
where $t:=(t_1,t_2,...,t_d)$. Similar arguments for $H_2$,...,$H_d$. We know that $u$ satisfies (H1)-(HJd). And when $t_1=t_2=...=t_d=0$, the equation changes to $\frac{\partial u}{\partial s}=0$, so $u(x,0,...,0)=U(x,1,0,...,0)=U(x,0,...,0)=u_0(x)$. Therefore $u$ also holds for the initial condition.
\end{proof}

More specifically, we have the following remark.

\begin{remark}
    Assume that $H$ satisy (H1)-(H4), then for each $t >0$, $u_0 \in W^{1,\infty}(\mathbb R^n)$, we have that
    \begin{equation}
        S_H(t)[u_0(x)]=S_{tH}(1)[u_0(x)] \quad in \quad \mathbb R^n.
    \end{equation}
\end{remark}
This is yielded by considering when $H=0$ in Theorem 3, then we know that $S_{\mu H}(1)=S_H(\mu)$ with the uniqueness of viscosity solution.

We will use Theorem 3 and Theorem 4 to prove Theorem 2. 
\begin{proof}[Proof of Theorem 2]
    Consider $s,\lambda,\mu \geq 0$.
    With Theorem 3 (ii) and Theorem 4, let $v(x,s,\lambda,\mu)=S_{\mu H+\lambda F}(s)[u_0(x)]$, we have
    \begin{align*}
        & S_{\mu H+\lambda F}(1)[u_0(x)]= S_F(\lambda)[v(x,1,0,\mu)]\\
        &= S_F(\lambda) \circ S_{\mu H} (1)[u_0(x)]= S_F(\lambda) \circ S_H(\mu) [u_0(x)]
    \end{align*}
\end{proof}

\section{Proof of Theorem 3}
We first introduce a lemma.
\begin{lemma}
    $F$, $H$ satisfy (H1)-(H5), denote
    \begin{equation}
        v^{\epsilon}(x,t,\lambda):=\underset{y\in \mathbb R^n}{\inf} \{ v(y,t,\lambda)+t\epsilon F^*\big(x,\frac{x-y}{t\epsilon},v(y,t,\lambda)\big)\},
    \end{equation}
    where $v(y,t,\lambda)$ is defined in Theorem 3.
    \item[(i)] $v^{\epsilon} \rightarrow v$ locally uniformly in $\mathbb R^n$ as $\epsilon \rightarrow 0$.
    \item[(ii)] For each $R$, $T$, $\Lambda>0$, $\epsilon$ small enough, $v^\epsilon$ is a viscosity supersolution of 
    \begin{equation}
        \frac{\partial w}{\partial t} +H(x,Dw,w)+(\lambda +\epsilon)F(x,Dw,w)=-o_{R,T,\Lambda}(\epsilon) \quad in \quad B(0,R) \times (0,T),
    \end{equation}
    for any $0 \leq \lambda \leq \Lambda$, where $o_{R,T,\Lambda}$ is a function such that $o_{R,T,\Lambda}(\epsilon) /\epsilon\rightarrow 0$ as $\epsilon \rightarrow 0$ for any fixed $R,T,\Lambda$.
     \item[(iii)] For each $R$, $T$, $\Lambda>0$, $\epsilon$ small enough, $v^\epsilon$ is a viscosity subsolution of 
    \begin{equation}
        \frac{\partial w}{\partial t} +H(x,Dw,w)+(\lambda +\epsilon)F(x,Dw,w)=o_{R,T,\Lambda}(\epsilon) \quad in \quad B(0,R) \times (0,T),\label{3}
    \end{equation}
    for any $0 \leq \lambda \leq \Lambda$, where $o_{R,T,\Lambda}$ is a function such that $o_{R,T,\Lambda}(\epsilon) /\epsilon\rightarrow 0$ as $\epsilon \rightarrow 0$ for any fixed $R,T,\Lambda$.
\end{lemma}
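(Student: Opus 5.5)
The plan is to read $v^{\epsilon}(\cdot,t,\lambda)$ as one explicit Lax--Oleinik step for $F$, of length $t\epsilon$, applied to $v(\cdot,t,\lambda)$. In this step the $u$-argument of $F^*$ is frozen at $v(y,t,\lambda)$. The Jacobi commutation condition (H5) should then make this step commute with the evolution generated by $G=H+\lambda F$, up to an error of order $\epsilon^2$.

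\textbf{Step 1: localization and part (i).} Fix $R,T,\Lambda$. By Proposition 1.1, $v$ is Lipschitz on $B(0,R')\times[0,T]\times[0,\Lambda]$, say with constant $L$, uniformly in $\lambda\le\Lambda$. This uses the equation in $\lambda$, or simply the Lipschitz estimate of the proof of Proposition 1.1, which is uniform for $\lambda\le\Lambda$.
\begin{itemize}
\item Taking $y=x$ gives the upper bound $v^{\epsilon}\le v+t\epsilon F^*(x,0,v)$. Here $F^*(x,0,u)=-\inf_pF(x,p,u)$ is bounded by (H1)--(H2).
\item The superlinearity of $F^*$, which is inherited from the boundedness in (H1), together with the Lipschitz bound on $v$, shows that any minimizer $y_\epsilon$ satisfies $|x-y_\epsilon|\le C t\epsilon$, with $q_\epsilon=(x-y_\epsilon)/(t\epsilon)$ bounded.
\end{itemize}
Hence $|v^{\epsilon}-v|\le Ct\epsilon$ on $B(0,R)\times[0,T]$, which gives (i). The infimum is attained, and the minimizers stay in a compact set.

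\textbf{Step 2: the test-function computation for (ii) and (iii).} Let $\phi$ touch $v^{\epsilon}$ from above (for (iii)) or from below (for (ii)) at $(x_0,t_0)$, with minimizer $y_0$ and $q_0=(x_0-y_0)/(t_0\epsilon)$.
\begin{itemize}
\item Shifting $x$ and $y$ together, $(x,y)\mapsto(x_0+h,\,y_0+h)$, transfers the touching from $v^{\epsilon}$ at $x_0$ to $v$ at $y_0$. This produces a test function $\psi(y,t)=\phi(y+x_0-y_0,t)-t\epsilon F^*\big(y+x_0-y_0,\tfrac{x_0-y_0}{t\epsilon}\cdot\tfrac{t_0}{t},v(y,t)\big)$ touching $v$. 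It is implicit in $v$ through the last slot; I would handle this by freezing that slot at $v(y_0,t_0)$ and absorbing the error using (H4) and the $O(\epsilon)$ size of the $u$-derivative term.
\item From $v_t+G(y,Dv,v)=0$ in the viscosity sense at $(y_0,t_0)$, and from the first-order conditions in $y$ (where $p=D\phi(x_0,t_0)=D_qF^*(x_0,q_0,v(y_0))$ up to $O(\epsilon)$ corrections), I get an inequality for $\phi_t(x_0,t_0)$.
\item The time derivative of $t\epsilon F^*$ contributes $\epsilon\big(F^*-q_0\cdot D_qF^*\big)=-\epsilon F(x_0,p,v)$ by Legendre duality. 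This is exactly the extra $\epsilon F$ in the target equation.
\end{itemize}

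\textbf{Step 3: the main obstacle.} It remains to compare $G(y_0,P,v(y_0))$, where $P$ is the momentum at $y_0$, with $G(x_0,p,v^{\epsilon}(x_0))$. The pair $(y_0,P,v(y_0))\mapsto(x_0,p,v^{\epsilon}(x_0))$ is, to first order, the time-$t\epsilon$ contact flow of $F$. Along this flow, $\frac{d}{ds}G=\{G,F\}_{\rm Jacobi}$ plus terms proportional to $G\,\partial_uF$. By (H5) (and trivially for the $\lambda F$ part), the bracket vanishes. The remaining $G\,\partial_uF$ term is exactly compensated by the frozen-$u$ shift $v^{\epsilon}-v$, through (H4). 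So the discrepancy is $O((t\epsilon)^2)=o_{R,T,\Lambda}(\epsilon)$.

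The difficulty is that $v$ is only Lipschitz, so $P$ is not a classical gradient. I would first try to show that a one-step inf-convolution with a convex kernel (the (H3) assumption on $F$) makes $v^{\epsilon}$ semiconcave in $x$ up to $O(\epsilon)$ terms. That would allow evaluating at points of differentiability, where the flow computation is a Taylor expansion of $C^1$ data on a compact set. If the $C^1$ regularity of $H,F$ is not enough for a quantitative $o(\epsilon)$, I would mollify $H,F$ in all variables, keeping (H5) up to a vanishing error, run the argument, and pass to the limit using stability of viscosity solutions. All constants depend only on $R,T,\Lambda$ through the compact set from Step 1, uniformly in $\lambda\in[0,\Lambda]$.
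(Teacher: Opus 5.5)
\textbf{Part (iii) has a genuine gap.} Your part (i) is essentially the paper's argument. Your remark that the superlinearity of $F^*$ comes from the bound in (H1) is more accurate than the paper's appeal to (H2). Your Step 2 transfer is also the right ``classical'' argument for the supersolution part (ii).

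The problem is in (iii). Suppose $\phi$ touches $v^{\epsilon}$ from \emph{above} at $(x_0,t_0)$. Then the shift $(x,y)\mapsto(x_0+h,y_0+h)$ gives nothing. Both $\phi$ and $v(y,t)+\Psi_\epsilon(x,y,t)$ lie above $v^{\epsilon}$ and agree with it at the base point, so neither bounds the other, and $v$ is not touched from above at $y_0$. Upper test functions do not pass through an inf-convolution.

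Moving to differentiability points of $v^{\epsilon}$, as in your Step 3, does not repair this. At such points you can touch $v^{\epsilon}$ from below, which legitimately gives $(p,a)\in D^-v(y_0,t_0)$. But the viscosity property of $v$ then only gives $a+G(y_0,p,v)\ge 0$, which is the wrong inequality for (iii).

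The missing idea is convexity of $G=H+\lambda F$ in $p$, i.e.\ (H3) for $H$ and not only for $F$. The paper uses it twice:
\begin{enumerate}
\item A Lipschitz function is a viscosity subsolution of a convex equation as soon as it is an a.e.\ subsolution. So it suffices to test at differentiability points of $v^{\epsilon}$.
\item A Lipschitz viscosity solution of a convex equation satisfies the \emph{equality} $a+G(y_0,p,v(y_0,t_0))=0$ for every $(p,a)\in D^-v(y_0,t_0)$.
\end{enumerate}
These are the results the paper takes from \cite{HT}, Theorems 2.27 and 2.29. With that equality, the computation gives $Q=o(\epsilon)$ a.e., which is (iii).

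Your proposed semiconcavity of $v^\epsilon$ is unnecessary and doubtful. The kernel $t\epsilon F^*(x,\frac{x-y}{t\epsilon},u)$ is convex in $x$. Semiconcavity would need an upper bound on $D^2_qF^*$, i.e.\ uniform convexity of $F$, which is not assumed. Even then the constant would be of order $1/\epsilon$.

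\textbf{Freezing the $u$-slot breaks the $o(\epsilon)$ conclusion.} If you freeze $v(y,t)$ at $v(y_0,t_0)$, you leave a perturbation $t\epsilon\,[F^*(\cdot,v(y,t))-F^*(\cdot,v(y_0,t_0))]$. It is only Lipschitz with constant $O(\epsilon)$, so it shifts the effective $(p,a)$, and hence $G$, by $O(\epsilon)$. That cannot be absorbed into an $o(\epsilon)$ error.

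These $O(\epsilon)$ terms are exactly what (H5) needs. Keeping the dependence gives $\bar r=p\,(1-\epsilon t\,\partial_uF)$ and $\partial_t v^{\epsilon}=-\epsilon F(x,\bar r,v_2)+a\,(1-\epsilon t\,\partial_uF)$.
\begin{itemize}
\item With $a=-G$, the factor $(1-\epsilon t\,\partial_uF)$ produces the $G\,\partial_uF$ term of (H5).
\item The momentum correction produces the $-p\cdot D_pG\,\partial_uF$ term.
\item The shift $v^{\epsilon}-v=\epsilon t(\bar r\cdot q-F)$, which you credit with the compensation, actually produces the $\partial_uG\,(p\cdot D_pF-F)$ terms.
\end{itemize}
So you must keep the implicit dependence, for instance by inverting $u\mapsto u+t\epsilon F^*(\cdot,u)$, which is monotone for small $\epsilon$ by (H4), rather than freeze it. This applies to (ii) as well.

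Finally, with $H,F$ only $C^1$ the Taylor remainder is $o(\epsilon)$, not $O(\epsilon^2)$. That is all the statement requires.
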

\begin{proof}
    We first prove (i). Since $v(y,t,\lambda)+t\epsilon F^*\big(x,\frac{x-y}{t\epsilon},v(y,t,\lambda)\big) \rightarrow +\infty$ as $|y| \rightarrow \infty$, we know that there exists $y_{\epsilon} \in \mathbb R^n$, s.t. $v^{\epsilon}(x,t,\lambda)= v(y_{\epsilon},t,\lambda)+t\epsilon F^*\big(x,\frac{x-y_{\epsilon}}{t\epsilon},v(y_{\epsilon},t,\lambda)\big)$. 
    
Let $k:=||Dv(x,t,\lambda)||_{L^{\infty}(\mathbb R^n)}(1+||\frac{\partial F^*}{\partial u}||_{L^{\infty}})$, with (H2) we know that there exists $C$, s.t.
    \begin{equation}
        \frac{F^*(x,p,u)-F^*(x,0,u)}{|p|}>k, \quad \forall |p|>C, x \in \mathbb R^n, u\in \mathbb R.\label{2}
    \end{equation}
    Therefore 
    \begin{align*}
        &v(y_{\epsilon},t,\lambda)+t\epsilon F^*\big(x,\frac{x-y_{\epsilon}}{t\epsilon},v(y_{\epsilon},t,\lambda)\big)\\
        & \leq v(x,t,\lambda)+t\epsilon F^*(x,0,v(x,t,\lambda))\\
        & \leq  v(y_{\epsilon},t,\lambda)+t\epsilon F^*\big(x,0,v(y_{\epsilon},t,\lambda)\big)+k|x-y_{\epsilon}|.
    \end{align*}
    With (\ref{2}), we know that $|x-y_{\epsilon}| \leq t\epsilon C$. Since $|x-y_{\epsilon}|/\epsilon t \leq C$, then we have
    \begin{align*}
        &\epsilon tkR \geq \epsilon t F^*(x,0,v(x,t,\lambda))\\
        &\geq v^{\epsilon}(x,t,\lambda)-v(x,t,\lambda)\\
        &\geq -k|x-y_{\epsilon}|-K_C \epsilon t\\
        &\geq -C^{\prime}\epsilon t,
    \end{align*}
    where $K_C$ is defined on (H1), $C^{\prime}$ is a positive constant independent of $\epsilon$ and $t$. Thus $v^{\epsilon} \rightarrow v$ locally uniformly in $\mathbb R^n \times [0,\infty)$ as $\epsilon \rightarrow 0$.
    
    We only prove (iii) since the argument of (ii) follows from more classical methods and similar computation. From \cite{HT} Theorem 2.27, we know that we only need to check that $v^{\epsilon}$ is an almost everywhere subsolution of (\ref{3}). Since $v^{\epsilon}$ is Lipschitz in $B(0,R) \times (0,T)$, $v^{\epsilon}$ is differentiable a.e. Therefore it is sufficient to prove that the inequality holds at each point of differentiability of $v^{\epsilon}$. Let $(x,t) \in B(0,R) \times (0,T)$ be a point of differentiability of $v^{\epsilon}$. Set 
    \begin{equation}
        \Psi _{\epsilon}(x,y,t)=\epsilon tF^*\big(x,\frac{x-y}{t\epsilon},v(y,t,\lambda)\big),
    \end{equation}
    then there exists $\bar y \in \mathbb R^n$, s.t.
    \begin{equation}
        v^{\epsilon}(x,t,\lambda)=v(\bar y,t,\lambda)+\Psi_{\epsilon}(x,\bar y,t).
    \end{equation}
    Because $(Dv^{\epsilon}(x,t,\lambda),\frac{\partial v^{\epsilon}}{\partial t}(x,t,\lambda)) \in D^-v^{\epsilon}(x,t,\lambda)$, from \cite{HT} Theorem 1.4, we know that there exists a $C^1$ function $\Phi$ s.t. $v^{\epsilon}- \Phi$ attains its global minimum at $(x,t)$, and 
    \begin{equation}
        \frac{\partial \Phi}{\partial t}(x,t)=\frac{\partial v^{\epsilon}}{\partial t}(x,t,\lambda), \quad D\Phi(x,t)=Dv^{\epsilon}(x,t,\lambda).
    \end{equation}
    So there exists $(p,a)\in D^-v(\bar y,t,\lambda)$ s.t.
    \begin{align}
        & a+\frac{\partial \Psi _{\epsilon}}{\partial t}(x,\bar y,t)=\frac{\partial v^{\epsilon}}{\partial t}(x,t,\lambda) \\
        &p=-D_y \Psi _{\epsilon}(x,\bar y,t), \quad D_x \Psi _{\epsilon}(x,\bar y,t)=D_x v^{\epsilon}(x,t,\lambda).
    \end{align}
    With \cite{HT} Theorem 2.29, we have
    \begin{equation}
        a+H(\bar y,p,v(\bar y,t,\lambda))+\lambda F(\bar y,p,v(\bar y,t,\lambda))=0
    \end{equation}
    Now we estimate
    \begin{equation}
        Q:=\frac{\partial v^{\epsilon}}{\partial t} +H(x,Dv^{\epsilon},v^{\epsilon})+(\lambda +\epsilon)F(x,Dv^{\epsilon},v^{\epsilon})
    \end{equation}
    at the point $(x,t,\lambda)$. Note that $q:=\frac{x-\bar y}{\epsilon t}$, we have
    \begin{align}
        & \frac{\partial v^{\epsilon}}{\partial t}(x,t,\lambda)=a+\epsilon[F^*(x,q,v(\bar y,t,\lambda)-D_pF^*(x,q,v(\bar y,t,\lambda)) \cdot q+ta \frac{\partial F^*}{\partial u}(x,q,v(\bar y,t,\lambda))]\label{4}\\
        &Dv^{\epsilon}(x,t,\lambda)=t\epsilon D_xF^*(x,q,v(\bar y,t,\lambda))+D_pF^*(x,q,v(\bar y,t,\lambda))\\
        &p=D_pF^*(x,q,v(\bar y,t,\lambda))-\epsilon t\frac{\partial F^*}{\partial u}(x,q,v(\bar y,t,\lambda))\cdot p
    \end{align}
    To simplify the computation, we denote that 
    \begin{equation}
        v_1:=v(x,t,\lambda),v_2:=v(\bar y,t,\lambda),v^{\epsilon}_1:=v^{\epsilon}(x,t,\lambda).\label{5}
    \end{equation}
    And if $\bar r \in \mathbb R^n$ s.t.
    \begin{equation}
        F^*(x,q,v_2)=\bar r \cdot q-F(x,\bar r,v_2)=\underset{r\in \mathbb R^n}{\sup} \{r\cdot q-F(x,r,v_2)\}
    \end{equation}
    Then $D_xF^*(x,q,v_2)=-D_xF(x,\bar r,v_2)$,$\frac{\partial F}{\partial u}(x,\bar r,v_2)=-\frac{\partial F^*}{\partial u}(x,q,v_2)$ and $D_pF^*(x,q,v_2)=\bar r$. Therefore $p(1-\epsilon t\frac{\partial F}{\partial u}(x,\bar r,v_2))=\bar r$. Combine (\ref{4})-(\ref{5}), we have
    \begin{align*}
        &\frac{\partial v^{\epsilon}}{\partial t}(x,t,\lambda)=a+\epsilon[F^*(x,q,v_2)-\bar r \cdot q+ta \frac{\partial F^*}{\partial u}(x,q,v_2)]\\
        &=-\epsilon F(x,\bar r,v_2)+a(1-\epsilon t\frac{\partial F}{\partial u}(x,\bar r,v_2)).
    \end{align*}
    Rewrite that 
    \begin{equation}
        Q=-\epsilon F(x,\bar r,v_2)+a(1-\epsilon t\frac{\partial F}{\partial u}(x,\bar r,v_2))+H(x,\bar r+p_2,v^{\epsilon}_1)+(\lambda+\epsilon)F(x,\bar r+p_2,v^{\epsilon}_1),
    \end{equation}
    where we denote that $p_2:=\epsilon tD_xF^*(x,q,v)$. Furthermore, consider (2.10), we have 
    \begin{equation*}
        Q=\epsilon[F(x,\bar r+p_2,v^{\epsilon}_1)-F(x,p,v_2)]+[H(x,\bar r+p_2,v^{\epsilon}_1)-H(\bar y,p,v_2)-\epsilon tH(\bar y,p,v_2)\frac{\partial F}{\partial u}(x,\bar r,v_2)]
    \end{equation*}
    \begin{equation}
        +\lambda [F(x,\bar r+p_2,v^{\epsilon})-F(\bar y,p,v)-\epsilon tF(\bar y,p,v_2)\frac{\partial F}{\partial u}(x,\bar r,v_2)]
    \end{equation}
    Since $v$ is Lipschitz continuous, $p$ and $\bar r$ are bounded, which implies that 
    \begin{equation}
        |p_2|=tO_{R,T,\Lambda}(\epsilon),
    \end{equation}
    where $O_{R,T,\Lambda}$ is a function s.t. $\frac{O_{R,T,\Lambda}(\epsilon)}{\epsilon}$ is bounded.
   
   It is obvious that 
    \begin{equation}
        \epsilon[F(x,p+p_2,v^{\epsilon})-F(x,p,v)]=o_{R,T,\Lambda}(\epsilon).
    \end{equation}
    Besides, we notice that
    \begin{equation}
        F(x,\bar r,v_2)=\bar r\cdot q-F^*(x,q,v_2)=\underset{r\in \mathbb R^n}{\sup} \{r\cdot \bar r-F^*(x,r,v_2)\},
    \end{equation}
    therefore
    \begin{equation}
        D_xF(x,\bar r,v_2)=-D_xF^*(x,q,v_2),\quad D_pF(x,\bar r,v_2)=q,
    \end{equation}
    which implies that\
    \begin{equation}
        p_2=\epsilon tD_xF^*(x,q,v_2)=-\epsilon t D_xF(x,\bar r,v_2), \quad \bar y=x-\epsilon tD_pF(x,\bar r,v_2).
    \end{equation}
    We define an operator 
    \begin{equation}
        \bar Q(G)=G(x,\bar r-\epsilon tD_xF(x,p,v_2),v^{\epsilon}_1)-G(x-\epsilon tD_pF(x,p,v_2),p,v_2)-\epsilon tG(\bar y,p,v_2)\frac{\partial F}{\partial u}(x,\bar r,v_2),
    \end{equation}
    where $G=H$ or $F$.
    We rewrite $\bar Q$ as following:
 \begin{align*}
&\bar Q(G) = G\bigl(x,\bar r-\epsilon tD_xF(x,\bar r,v_2),v_2+\epsilon t(\bar r\cdot q-F(x,\bar r,v_2))\bigr) \\
&\quad -G\bigl(x-\epsilon tD_pF(x,\bar r,v_2),p,v_2\bigr)-\epsilon tG(\bar y,p,v_2)\frac{\partial F}{\partial u}(x,\bar r,v_2)\\[4pt]
&= \int_0^1 \frac{d}{d\tau} G\Bigl(x-\epsilon t(1-\tau)D_pF(x,\bar r,v_2), \\
&\qquad \bar r-\epsilon t\tau D_xF(x,\bar r,v_2)+\epsilon t(1-\tau)\frac{\partial F}{\partial u}(x,\bar r,v_2)p, \\
&\qquad v_2+\epsilon t\tau (\bar r\cdot q-F(x,\bar r,v_2))\Bigr) d\tau \\
&\quad -\epsilon tG(\bar y,p,v_2)\frac{\partial F}{\partial u}(x,\bar r,v_2)\\[4pt]
&= \epsilon t\Bigl[ D_x G(x,p,v_2) D_pF(x,p,v_2)- D_p G(x,p,v_2) D_xF(x,p,v_2) \\
&\qquad + p \cdot D_pF(x,p,v_2) \cdot \frac{\partial G}{\partial u}(x,p,v_2) \\
&\qquad - p \cdot D_pG(x,p,v_2) \cdot \frac{\partial F}{\partial u}(x,p,v_2) \\
&\qquad + \frac{\partial F}{\partial u}(x,p,v_2) G(x,p,v_2)-\frac{\partial G}{\partial u}(x,p,v_2) F(x,p,v_2)\Bigr] \\
&\quad + o_{R,T,\Lambda}(\epsilon),
\end{align*}
   here we used that $|p-\bar r|=tO_{R,T,\Lambda}(\epsilon)$. Then with (H5), we deduce that $\bar Q(G)=o_{R,T,\Lambda}(\epsilon)$, for $G=H$ or $F$. Therefore
\begin{align*}
    & Q=\epsilon[F(x,p+p_2,v^{\epsilon})-F(x,p,v)]+\bar Q(H)+\lambda \bar Q(F)\\
    &=o_{R,T,\Lambda}(\epsilon),
\end{align*}
   
\end{proof}
Now we come back to Theorem 3.

\begin{proof}[Proof of Theorem 3]
With Lemma 2.1, we know that %%%%%换定理引用
\begin{equation}
    w_1(x,t):=v^{\epsilon}(x,t,\lambda)+to_{R,T,\Lambda}(\epsilon)
\end{equation}
is a supersolution of the equation:
\begin{equation}
    \frac{\partial w}{\partial t} +H(x,Dw,w)+(\lambda +\epsilon)F(x,Dw,w)=0 \quad in \quad B(0,R) \times (0,T)
\end{equation}
With the comparison principle for contact type Hamilton-Jacobi equations, which refers to \cite{ishii2022hamilton} Lemma A.2, for every $0 \leq \lambda \leq \Lambda$, $R^\prime >0$ and $0 \leq t \leq T$, there exists $R>0$ large enough such that, for $\epsilon$ small enough, we have
\begin{equation}
    v(x,t,\lambda+\epsilon) \leq v^{\epsilon}(x,t,\lambda)+to_{R,T,\Lambda}(\epsilon) \quad in \quad B(0,R^\prime) \times \{t\}.
\end{equation}

Furthermore,
\begin{equation}
    v(x,t,\lambda+\epsilon) \leq v(y,t,\lambda)+\epsilon tF^*\big(x,\frac{x-y}{t\epsilon},v(y,t,\lambda)\big)+to_{R,T,\Lambda}(\epsilon) \quad in \quad B(0,R^\prime) \times \{t\}
\end{equation}
for every $y \in \mathbb R^n$. Now if $v$ is $C^1$, for any $q \in \mathbb R^n$, we pick $y \in \mathbb R^n$ s.t.$\frac{x-y}{\epsilon t}=q$, then we learn that 
\begin{equation}
    0\leq -Dv(x,t,\lambda)\cdot \epsilon tq-\epsilon \frac{\partial v}{\partial \lambda}(x,t,\lambda)+to_{R,T,\Lambda}(\epsilon)+o(\epsilon)\quad in \quad B(0,R^\prime) \times \{t\},
\end{equation}
Dividing $\epsilon$ and letting $\epsilon \rightarrow 0$, 
\begin{equation}
    \frac{\partial v}{\partial \lambda}(x,t,\lambda)+t[Dv(x,t,\lambda)\cdot q-F^*(x,q,v(x,t,\lambda))]\leq 0
\end{equation}
Finally,for any $t>0$,$R^\prime >0$,$\Lambda >0$, we take the supremum for $q\in \mathbb R^n$,
\begin{equation}
    \frac{\partial v}{\partial \lambda}(x,t,\lambda)+tF(x,Dv(x,t,\lambda),v(x,t,\lambda))\leq 0, \quad in \quad (x,\lambda)\in B(0,R^\prime) \times (0,\Lambda).
\end{equation}
Similarily, we can prove that $v$ is the supersolution of (\ref{1}).
\end{proof}
\section{Further results and extensions}
\begin{remark}
    There is a question left : where is the condition (H5) from? 
    
    We use an approximation:
    \begin{equation}
        u(x,\Delta t)=u_0(x)-H(x,Du_0(x),u_0(x))\Delta t
    \end{equation}
    where $u$ is the viscosity solution of (HJ), and $\Delta t$ is small. To simplify the computation, we denote that $p:=Du_0(x)$, $u:=u_0(x)$.
    Thus 
    \begin{align*}
        &S_H \circ S_F(\Delta t)[u_0(x)]=S_H(\Delta t)[u_0(x)-F(x,Du_0(x),u_0(x))\Delta t]\\
        &=u-F(x,p,u)\Delta t-H(x,p-D_x(F(x,Du_o(x),u_0(x))),u-F(x,Du_0(x),u_0(x))\Delta t)\Delta t\\
        &=u-F(x,p,u)\Delta t-H(x,p-D_xF(x,p,u)\Delta t-D^2u_0(x)D_pF(x,p,u)\Delta t-\frac{\partial F}{\partial u}p\Delta t,\\
        &u-F(x,Du_0(x),u_0(x))\Delta t)\Delta t\\
        &=u-F(x,p,u)\Delta t-H(x,p,u)\Delta t-D_pH(x,p,u)\cdot D_xF(x,p,u)(\Delta t)^2-\\
        &D_pH^TD^2u_o(x)D_pF(\Delta t)^2-\frac{\partial F}{\partial u}p\cdot D_pH(\Delta t)^2-\frac{\partial H}{\partial u}F(\Delta t)^2,
    \end{align*}
    With $S_H \circ S_F(\Delta t)[u_0(x)]=S_F \circ S_H(\Delta t)[u_0(x)]$ for all $u_0\in C^1(\mathbb R^n)$, we get (H5).
\end{remark}
With this remark, we can know inequality. We modify the condition (H5).
\begin{itemize}
    \item[(H5.1)] $H$ and $F$ satisfies 
     \begin{align*}
          D_x H(x,p,u) D_pF(x,p,u)- D_p H(x,p,u) D_xF(x,p,u) + p \cdot D_pF(x,p,u) \cdot \frac{\partial H}{\partial u} 
          \\ - p \cdot D_pH(x,p,u) \cdot \frac{\partial F}{\partial u} + \frac{\partial F}{\partial u}(x,p,u) H(x,p,u)-\frac{\partial H}{\partial u}(x,p,u) F(x,p,u)\leq 0,
     \end{align*}
     for every $(x,p,u) \in \mathbb R^n \times \mathbb R^n \times \mathbb R$
\end{itemize}
\begin{result}
     Assume that Hamiltonian $H$ and $F$ satisfy (H1)-(H4) and (H5.1), then for each $\phi \in  C(\mathbb R^n,\mathbb R)$, 
     \begin{align}
         S_H(\lambda) \circ S_F(\mu)[\phi(x)]\leq S_F(\mu) \circ S_H(\lambda)[\phi(x)], \quad\forall x \in \mathbb R^n, \lambda \geq 0, \mu \geq 0.
     \end{align}
\end{result}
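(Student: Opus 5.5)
The plan is to rerun the argument for Theorem 2, tracking inequalities instead of equalities. The starting point is Lemma 2.1 with the sign information from (H5.1) kept. Fix $\lambda,\mu\ge 0$. For $s\in[0,1]$ let $v(x,s,\lambda)$ be the solution of (HJ) with Hamiltonian $\mu H+\lambda F$, as in Theorem 3. In the expansion of $\bar Q(G)$ from the proof of Lemma 2.1, the bracket multiplying $\epsilon t$ is the left-hand side of (H5.1), with $G$ playing the role of $H$. The key observation is therefore that under (H5.1) we get $\bar Q(\mu H)\le o_{R,T,\Lambda}(\epsilon)$. The second term, $\lambda\bar Q(F)$, is again $o(\epsilon)$, because the bracket is antisymmetric and vanishes when $G=F$. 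Hence
\[
Q\le o_{R,T,\Lambda}(\epsilon).
\]
This is exactly statement (iii) of Lemma 2.1, the subsolution half, with no change of hypotheses. For the supersolution half (ii), only the bound $Q\ge -o(\epsilon)$ was needed, and that is where equality in (H5) was used.

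I would first decide which direction is actually available. The one-sided information gives that $v^\epsilon$ is a subsolution, up to $o(\epsilon)$, of the equation with $\lambda+\epsilon$. Comparison, exactly as in the proof of Theorem 3 but reversed, then gives
\[
v^\epsilon(x,t,\lambda)-t\,o(\epsilon)\le v(x,t,\lambda+\epsilon).
\]
Since $v^\epsilon$ is an inf-convolution, turning this into a differential inequality in $\lambda$ needs care. At a point of differentiability, taking the minimizer $\bar y$ with $(x-\bar y)/(\epsilon t)\to q$, where $q$ is the optimal velocity, yields the supersolution inequality
\[
\partial_\lambda v+tF(x,Dv,v)\ge 0.
\]
Combined with Theorem 3(i), which needs only convexity of $F$ and gives the subsolution inequality, this would make $v$ a solution. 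That is too strong. So the honest route is: (H5.1) gives that $v$ is a viscosity supersolution of (\ref{1}), and Theorem 3(i) gives that it is a subsolution. I must therefore check the signs carefully; if the inequality from (H5.1) instead lands on the subsolution side, the supersolution property is lost, and $v$ is only a subsolution of (\ref{1}).

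The comparison then goes as follows. At $\lambda=0$, $v(\cdot,1,0)=S_{\mu H}(1)\phi=S_H(\mu)\phi$ by Remark 1.1. The function $\lambda\mapsto v(\cdot,1,\lambda)$ is a subsolution (or supersolution, according to the sign) of $\xi_\lambda+F(x,D\xi,\xi)=0$ with initial datum $S_H(\mu)\phi$. The comparison principle (\cite{ishii2022hamilton} Lemma A.2) then gives
\[
S_{\mu H+\lambda F}(1)\phi\le S_F(\lambda)S_H(\mu)\phi .
\]
Exchanging the roles of $H$ and $F$ flips the sign of the (H5.1) bracket, which is antisymmetric. The same argument then gives
\[
S_{\mu H+\lambda F}(1)\phi\ge S_H(\mu)S_F(\lambda)\phi .
\]
Chaining the two yields
\[
S_H(\mu)\circ S_F(\lambda)\phi\le S_{\mu H+\lambda F}(1)\phi\le S_F(\lambda)\circ S_H(\mu)\phi,
\]
which is the claimed inequality after renaming the parameters $\lambda,\mu$.

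The main obstacle is bookkeeping of signs. I need to determine which of the two one-sided statements in Lemma 2.1 survives under (H5.1), and to confirm that the direction obtained for the pair $(H,F)$ together with the reversed one for $(F,H)$ chains consistently to the stated inequality rather than its opposite. A further technical point is that the statement allows $\phi\in C(\mathbb R^n)$, whereas Theorem 3 is for Lipschitz BUC data. For general continuous data I would prove the inequality first for $\phi\in\text{Lip}\cap BUC$. I would then extend it by monotone approximation of $\phi$ by Lipschitz functions, using the representation formula of Theorem 1 and the monotonicity and stability of $S_H$ and $S_F$.
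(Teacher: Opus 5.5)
The gap is the sign bookkeeping you defer. You settle it by taking the direction the target needs, not the one your own computation gives.

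Write $B(H,F)$ for the left-hand side of (H5.1). It is antisymmetric, and $\bar Q(\mu H)=\epsilon t\mu B(H,F)+o(\epsilon)$. Under (H5.1) this gives only the subsolution half (iii) of Lemma 2.1. Hence $v^\epsilon(\cdot,t,\lambda)-t\,o(\epsilon)\le v(\cdot,t,\lambda+\epsilon)$, and, as you correctly derive, $v$ is a viscosity \emph{super}solution of (\ref{1}). Comparison with the datum $S_H(\mu)\phi$ at $\lambda=0$ then gives $S_{\mu H+\lambda F}(1)\phi\ge S_F(\lambda)S_H(\mu)\phi$, not the ``$\le$'' you use.

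In the swapped step the bracket is $B(F,H)=-B(H,F)\ge 0$. This gives $Q\ge -o(\epsilon)$ only at points of differentiability of $v^\epsilon$, which is not enough: the a.e.\ criterion in Lemma 2.1 works only for subsolutions of convex equations. Even granting the supersolution half (ii), it makes $v$ a \emph{sub}solution in $\mu$, i.e.\ $S_{\mu H+\lambda F}(1)\phi\le S_H(\mu)S_F(\lambda)\phi$. Tracked correctly, the method gives $S_F(\lambda)S_H(\mu)\phi\le S_{\mu H+\lambda F}(1)\phi\le S_H(\mu)S_F(\lambda)\phi$, the reverse of your chain. Nor can you invoke Theorem 3(i): Theorem 3 assumes (H5), and part (i) is proved through Lemma 2.1(ii), which would require $B(H,F)\ge 0$.

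This cannot be repaired, because the stated inequality is false. The paper's one-line proof (``the same as the argument for Theorem 2'') has the same defect. The heuristic in Remark 3.1 also has the wrong sign on all $(\Delta t)^2$ terms: expanding $-\Delta t\,H$ at a point shifted by $-\Delta t(\cdots)$ produces $+(\Delta t)^2$ contributions. Done correctly, $S_H(\Delta t)S_F(\Delta t)u_0-S_F(\Delta t)S_H(\Delta t)u_0=-(\Delta t)^2B(H,F)+O((\Delta t)^3)$.

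For a counterexample, take $n=1$, $H=\tfrac12p^2$ and $F=\theta(u)+\tfrac12p^2$, with $\theta\in C^1(\mathbb R)$ bounded, $0\le\theta'\le 1$ and $\theta(u)=u$ on $[0,1]$. Then (H1)--(H4) hold and $B(H,F)=-\tfrac12\theta'(u)p^2\le 0$. For $\phi$ with values in $[0,1]$, every function involved stays in $[0,1]$. The substitution $u=e^{-t}w$ gives $S_F(\mu)\phi(x)=e^{-\mu}\inf_y\{\phi(y)+|x-y|^2/(2(1-e^{-\mu}))\}$. Composing Hopf--Lax kernels then gives
\begin{align*}
S_H(\lambda)S_F(\mu)\phi(x)&=\inf_y\Big\{e^{-\mu}\phi(y)+\frac{|x-y|^2}{2(e^{\mu}-1+\lambda)}\Big\},\\
S_F(\mu)S_H(\lambda)\phi(x)&=\inf_y\Big\{e^{-\mu}\phi(y)+\frac{|x-y|^2}{2(e^{\mu}\lambda+e^{\mu}-1)}\Big\}.
\end{align*}
Since $e^{\mu}\lambda\ge\lambda$, we get $S_F(\mu)S_H(\lambda)\phi\le S_H(\lambda)S_F(\mu)\phi$. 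The inequality is strict for $\phi(y)=\min\{|y|,1\}$, $x=1$, $\lambda=\mu=1$, where $S_F(1)S_H(1)\phi(1)=\frac{1}{2(2e-1)}<\frac{1}{2e}=S_H(1)S_F(1)\phi(1)$.

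So under (H5.1) the most your approach can deliver is the reversed inequality $S_H(\lambda)\circ S_F(\mu)\phi\ge S_F(\mu)\circ S_H(\lambda)\phi$. Equivalently, it can give the stated inequality under $B(H,F)\ge 0$. Even that still needs a genuine proof of the supersolution half of Lemma 2.1 under a one-sided bracket.
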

The proof is the same as the arguement for Theorem 2.

Now we will consider the question of whether the condition (H1)-(H5) can be relaxed.
\begin{proposition}
    Assume that $\{H^{\epsilon}\}_{\epsilon >0}$ and $\{F^{\epsilon}\}_{\epsilon >0}$ are two sequences of $C^1$ Hamiltonian, satisfying (H1)-(H4), and $(H^{\epsilon},F^{\epsilon})$ satisfies (H5) for any $\epsilon >0$. If $H^{\epsilon}$ and $F^{\epsilon}$ converges to  $H$ and $F$ respectively, then $S_H$ and $S_F$ commute.
\end{proposition}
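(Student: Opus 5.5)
The plan is a stability argument: apply Theorem 2 to each approximating pair and pass to the limit $\epsilon\to0$ with the stability of viscosity solutions under locally uniform convergence of Hamiltonians. Throughout, I read "converges" as locally uniform convergence on $\mathbb R^n\times\mathbb R^n\times\mathbb R$. I also assume that $H$ and $F$ satisfy (H1)--(H4), so that $S_H$ and $S_F$ are defined by Theorem 1. If necessary I will additionally assume that the constants in (H1), the superlinear growth in (H2) and the Lipschitz bound in (H4) are uniform in $\epsilon$; this is the natural hypothesis making the statement meaningful.

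\textbf{Step 1 (commutation at level $\epsilon$).} Fix $\phi\in \text{Lip}(\mathbb R^n)\cap BUC(\mathbb R^n)$ and $\lambda,\mu\ge0$. By Theorem 2 applied to $(H^\epsilon,F^\epsilon)$,
\[
S_{H^\epsilon}(\lambda)\circ S_{F^\epsilon}(\mu)[\phi]=S_{F^\epsilon}(\mu)\circ S_{H^\epsilon}(\lambda)[\phi].
\]

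\textbf{Step 2 (uniform Lipschitz bounds).} Let $w^\epsilon(x,s)=S_{F^\epsilon}(s)[\phi](x)$. Revisiting the proof of Proposition 1.1, the Lipschitz constant of $w^\epsilon$ on $\mathbb R^n\times[0,T]$ depends only on $C_1$ (controlled by $\|\phi\|_{W^{1,\infty}}$ and the uniform $K_R$), on $\Lambda$ (the uniform $u$-Lipschitz bound), and on the coercivity modulus $Q(R)$. All three are uniform in $\epsilon$. Hence $\{w^\epsilon\}$ is equi-Lipschitz and equi-bounded. By Arzel\`a--Ascoli, a subsequence converges locally uniformly to some $w$. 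The standard stability theorem for viscosity solutions (the limit of the envelopes, which here coincide) shows that $w$ solves $w_s+F(x,Dw,w)=0$ with $w(\cdot,0)=\phi$. By uniqueness in Proposition 1.1, $w=S_F(\cdot)[\phi]$, and the whole family converges. In particular $\psi^\epsilon:=S_{F^\epsilon}(\mu)[\phi]\to\psi:=S_F(\mu)[\phi]$ locally uniformly, with $\psi^\epsilon$ equi-Lipschitz and bounded.

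\textbf{Step 3 (composition passes to the limit).} I must show $S_{H^\epsilon}(\lambda)[\psi^\epsilon]\to S_H(\lambda)[\psi]$. I use the same compactness argument with initial data $\psi^\epsilon$. These data are uniformly Lipschitz and bounded but converge only locally uniformly, which does not matter for the stability argument. Any limit is a Lipschitz viscosity solution with Hamiltonian $H$ and initial datum $\psi$, so by uniqueness it equals $S_H(\lambda)[\psi]$. Symmetrically, $S_{F^\epsilon}(\mu)\circ S_{H^\epsilon}(\lambda)[\phi]\to S_F(\mu)\circ S_H(\lambda)[\phi]$. Passing to the limit in Step 1 then gives the claimed commutation.

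\textbf{Main obstacle.} The delicate point is Step 3: the initial data converge only locally uniformly, and the domain $\mathbb R^n$ is unbounded. The bound $|D_pH|\le K_R(1+|x|)$ gives finite speed of propagation only up to linear growth. I would therefore first check that the limit function attains the initial data uniformly, using the barriers $f^\pm$ from the proof of Proposition 1.1 built with $\psi^\epsilon$. These barriers are uniform in $\epsilon$ because $C_1$ is. I would then identify the limit through uniqueness in the class of bounded Lipschitz functions, rather than through a quantitative estimate of $\|S_{H^\epsilon}(\lambda)[\psi^\epsilon]-S_H(\lambda)[\psi]\|$. This avoids needing global uniform convergence of the data.
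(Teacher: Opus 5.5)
Your proposal is correct and follows the same route as the paper, whose proof is a single appeal to stability of viscosity solutions under locally uniform convergence of the Hamiltonians (Theorem 1.13 of \cite{HT}), applied to the approximating semi-groups that commute by Theorem 2. Your version is more complete in two ways. First, the uniform-in-$\epsilon$ constants you add in (H1), (H2), (H4) are exactly what gives the equi-Lipschitz compactness that the paper's one-line citation takes for granted. Second, once that Lipschitz bound holds up to $t=0$, the initial-data issue you flag as the main obstacle is automatic: locally uniform convergence on $\mathbb R^n\times[0,T]$ already identifies the limit's initial datum as $\psi$, and uniqueness then identifies the limit.
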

With \cite{HT} Theorem 1.13, we know that $S_H$ and $S_F$ are well-defined and commute. Indeed, in this proposition, we do not need that $H$ and $F$ are $C^1$. In this section, all the Hamiltonians are more general, are locally Lipschitz.

\begin{result}
    Assume that $H$ is locally Lipschitz satisfying (H1)-(H4), and $f:\mathbb R \mapsto \mathbb R$ is an increasing, convex continuous function. Then the semi-groups $S_H$ and $S_{f(H)}$ commute.
\end{result}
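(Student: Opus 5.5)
The plan is to reduce the statement to Theorem 2 via the approximation result in the Proposition just above. First observe formally why $H$ and $G:=f(H)$ should satisfy (H5). If $f$ is $C^1$, then
\[
D_xG=f'(H)D_xH,\quad D_pG=f'(H)D_pH,\quad \partial_u G=f'(H)\partial_u H .
\]
Substituting into (H5), the bracket terms $D_xH\,D_pG-D_pH\,D_xG$ cancel identically. Likewise $p\cdot D_pG\,\partial_uH-p\cdot D_pH\,\partial_uG=0$. The last two terms give $f'(H)\partial_uH\,H-\partial_uH\,f(H)=\partial_uH\,(f'(H)H-f(H))$.

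This remaining term is not zero in general. Here the contact case differs from the Poisson case: the Jacobi bracket of $H$ with $f(H)$ is $\partial_uH\,(Hf'(H)-f(H))$. So before proceeding I would first check the statement in the case $\partial_u H\equiv 0$, or when $f$ is linear. I expect the theorem is intended either under this implicit assumption, or with the understanding that (H5) is verified for $f(H)$ via the computation above. My plan is to carry out the proof assuming the residual vanishes, and to flag that the main obstacle lies exactly here.

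Concretely, the steps are as follows.

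\textbf{Step 1: regularize.} Mollify $H$ in $(x,p,u)$ to get $H^\epsilon\in C^1$ satisfying (H1)--(H4) uniformly; mollification preserves convexity in $p$, superlinearity and the Lipschitz bound in $u$. Also mollify $f$ to obtain $f^\epsilon$, smooth, increasing and convex. Set $F^\epsilon:=f^\epsilon(H^\epsilon)$.

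\textbf{Step 2: check the hypotheses on $F^\epsilon$.} Verify that $F^\epsilon$ satisfies (H1)--(H4).
\begin{itemize}
\item Convexity in $p$ holds because a convex nondecreasing function composed with a convex function is convex.
\item Superlinearity (H2) requires $f$ to grow at least linearly at $+\infty$. Since $f$ is convex and increasing, it is either eventually bounded below by $cs$ with $c>0$, or constant near $+\infty$. The latter is impossible unless $f$ is constant, which is a degenerate trivial case I would treat separately.
\item The bound $|\partial_uF^\epsilon|\le C$ needs $f'$ bounded on the range of $H^\epsilon$. Since (H1) only bounds $H$ on bounded $p$-sets, I would localize: the Lipschitz solutions only see $|p|\le R$, so I can modify $F^\epsilon$ outside $\{|p|\le R\}$ without changing the semigroups on the relevant time interval.
\end{itemize}

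\textbf{Step 3: apply Theorem 2 and pass to the limit.} Using the formal computation above, (H5) holds for $(H^\epsilon,F^\epsilon)$, so Theorem 2 gives commutation of $S_{H^\epsilon}$ and $S_{F^\epsilon}$. Then apply the preceding Proposition, with $H^\epsilon\to H$ and $F^\epsilon\to f(H)$ locally uniformly, to conclude that $S_H$ and $S_{f(H)}$ commute.

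The main obstacle is the residual term $\partial_uH\,(Hf'(H)-f(H))$ in (H5). This is the point I would try to resolve first. My first attempt is to exploit the structure of the proof of Theorem 3 in this special case. Since $f(H)$ is a function of $H$ alone, $v(x,t,\lambda)=S_{H+\lambda f(H)}(t)u_0=S_{g_\lambda(H)}(t)u_0$, where $g_\lambda(s)=s+\lambda f(s)$. I would then aim to show directly, through the implicit variational principle (Proposition 1.2) and Remark 1.1 (time rescaling), that along characteristics the value of $H$ is transported in a way compatible with $f$.

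If this fails, the fallback is to prove the result under the additional hypothesis $\partial_uH\equiv0$ or $f$ affine, where the residual is identically zero and Steps 1--3 go through as described.
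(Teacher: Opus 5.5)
Your computation is correct, and the residual $\partial_uH\,(Hf'(H)-f(H))$ is not a technicality: the statement is false as written. Take the following data:
\begin{itemize}
\item $H(x,p,u)=\tfrac12|p|^2+\sin u$, which is smooth and satisfies (H1)--(H4);
\item $f(s)=s+b$ with $0<b<1$, which is increasing, convex and continuous;
\item initial datum $\phi\equiv0$.
\end{itemize}
For constant data both equations are solved classically by functions of $t$ alone. By uniqueness, the semigroups therefore act on constants as the flows of $\dot c=-\sin c$ and $\dot c=-\sin c-b$.

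Since $\sin 0=0$, we have $S_H(\lambda)[0]\equiv0$. Hence $S_{f(H)}(\mu)\circ S_H(\lambda)[0]\equiv c_\mu$, where $\dot c=-\sin c-b$, $c(0)=0$, and $c_\mu\in(-\arcsin b,0)$ for $\mu>0$. But $S_H(\lambda)\circ S_{f(H)}(\mu)[0]$ is the solution of $\dot d=-\sin d$, $d(0)=c_\mu$. This solution increases strictly toward the equilibrium $0$, because $-\sin d>0$ on $(-\pi,0)$. So it is $>c_\mu$ for every $\lambda>0$, and the semigroups do not commute. At $p=0$ your residual equals $-b\cos u\neq0$, which is exactly what this example detects.

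The paper's proof is precisely your Steps 1--3. It mollifies $f$ and $H$, asserts that $(f^\epsilon(H^\epsilon),H^\epsilon)$ satisfies (H5), and concludes by Proposition 3.1 (Theorem 2 for the approximants plus stability). That assertion is what your computation refutes, so the paper's argument breaks at the same point. In your write-up, the sentence ``using the formal computation above, (H5) holds'' must be removed, not assumed. Your planned repair through the implicit variational principle cannot work either, since it would prove a false statement.

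Your fallback needs one correction. The case $\partial_uH\equiv0$ is fine. Mollification keeps $H^\epsilon$ independent of $u$, so the residual vanishes identically. Also (H4) for $F^\epsilon=f^\epsilon(H^\epsilon)$ becomes trivial, so your localization worry in Step 2 disappears and Steps 1--3 go through; this is the classical ($u$-independent) setting of Barles--Tourin.

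But ``$f$ affine'' is not enough. For $f(s)=as+b$ the residual is $-b\,\partial_uH$, and the counterexample above is of exactly that form. The residual vanishes only if $sf'(s)=f(s)$ at the values of $H$ where $\partial_uH\neq0$, i.e.\ $f$ must be positively homogeneous of degree one there. For $f(s)=as$ the conclusion is immediate from the rescaling identity $S_{aH}(\mu)=S_H(a\mu)$ (the paper's remark $S_H(t)=S_{tH}(1)$) and the semigroup property, with no approximation needed. For nonsmooth homogeneous $f$ such as $s^+$, mollifying $f$ destroys exact homogeneity near $s=0$. Then (H5) holds only approximately, so Proposition 3.1 does not apply verbatim.
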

\begin{proof}
    Let ${\eta}^{\epsilon}$ be the standard mollifier in $\mathbb R$, denote that $f^{\epsilon}(x):=f*\eta^{\epsilon}(x)$. Then we know that $f^{\epsilon}\in C^{\infty}$, $f^{\epsilon} \rightarrow f$ locally uniformly as $\epsilon \rightarrow 0$. Similarly we can define $\{H^{\epsilon}\}_{\epsilon >0}$, 
 s.t. $H^{\epsilon} \rightarrow H$ locally uniformly. Then we can check that $f^{\epsilon}(H^{\epsilon})$, $H^{\epsilon}$ satisfy (H1)-(H4) and $(f^{\epsilon}(H^{\epsilon}),H^{\epsilon})$ satisfies (H5). With proposition 3.1 we know that $S_H$ and $S_{f(H)}$ commute.
\end{proof}
\begin{remark}
    With Theorem 6, we can deduce that $S_H$ and $S_{H^+}$ commute, where $H^+=max\{H,0\}$. Furthermore, assume that $H$ and $F$ are $C^1$ and satisfy (H1)-(H5). If $f$ and $: \mathbb R \rightarrow \mathbb R$ are increasing, convex continuous functions, then $S_{f(H)}$ and $S_{g(F)}$ commute.
\end{remark}
Now we will consider (H5) in almost everywhere sense.
\begin{result}
    Assume that $H$ and $F$ satisfy (H1)-(H4), and $(H,F)$ satisfies (H5) in the almost everywhere sense. If there exist sequences $\{\phi_n\}_{n\in \mathbb N}$ and $\{\psi_n\}_{n\in \mathbb N}$ of $C^1$ functions from $\mathbb R$ to $\mathbb R$, s.t. $\phi_n(H)$ and $\psi_n(F)$ are $C^1$ and convex with respect to $p$, and converge to $H$ and $F$ respectively locally uniformly, then $S_H$ and $S_F$ commute.
\end{result}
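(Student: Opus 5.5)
The plan is to reduce the statement to the approximation result, Proposition 3.1. That proposition says: if $H^n\to H$ and $F^n\to F$ locally uniformly, where each $H^n,F^n$ is $C^1$, satisfies (H1)-(H4), and each pair $(H^n,F^n)$ satisfies (H5), then $S_H$ and $S_F$ commute. So it suffices to produce such approximating pairs. The natural candidates are $H^n:=\phi_n(H)$ and $F^n:=\psi_n(F)$. By hypothesis they are $C^1$, convex in $p$, and converge locally uniformly to $H$ and $F$. Three things remain: (H5) for each pair, (H1), (H2), (H4) along the sequence, and the passage to the limit.

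\textbf{Step 1: (H5) for the composed pair.} Write $\{G,K\}$ for the left-hand side of (H5), which is the Jacobi bracket of $G$ and $K$. We compute $\{\phi(H),\psi(F)\}$ by the chain rule, at points where $H$ and $F$ are differentiable (almost everywhere, by Rademacher, since they are locally Lipschitz). The first four terms of (H5) are derivations in each slot. Each of them contributes
\[
\phi'(H)\psi'(F)\times(\text{the corresponding term of } \{H,F\}).
\]
The last two terms do not behave this way: the bracket is not a derivation in the zeroth-order part. They give
\[
\psi'(F)\phi(H)\,\partial_uF-\phi'(H)\psi(F)\,\partial_uH ,
\]
which is not $\phi'\psi'(\partial_uF\,H-\partial_uH\,F)$. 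So in general $\{\phi(H),\psi(F)\}\neq\phi'\psi'\{H,F\}$. This is the main obstacle. The discrepancy vanishes in two situations: when $\phi,\psi$ are linear (or at least $1$-homogeneous, i.e.\ $\phi(s)=s\phi'(s)$), or when $\partial_u H=\partial_u F=0$.

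\textbf{Handling the obstacle.} I would first try to absorb the discrepancy into $\epsilon$-errors. Proposition 3.1 rests on Lemma 2.1, which only needs (H5) up to $o(1)$ locally uniformly. So it is enough to show that $\{\phi_n(H),\psi_n(F)\}\to0$ locally uniformly. The symmetric part, $\phi_n'\psi_n'\{H,F\}$, tends to $0$ because $\{H,F\}=0$ almost everywhere. The discrepancy equals
\[
\psi_n'(F)\,\partial_uF\,\bigl(\phi_n(H)-H\phi_n'(H)\bigr)-\phi_n'(H)\,\partial_uH\,\bigl(\psi_n(F)-F\psi_n'(F)\bigr).
\]
If $\phi_n(H)\to H$ in $C^1$-type sense, meaning $\phi_n'\to1$ on the range of $H$ over compacts, then both brackets tend to $0$. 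I would therefore aim to show, or add as a harmless normalization, that $\phi_n'\to1$ and $\psi_n'\to1$ locally uniformly on the relevant ranges. Locally uniform convergence $\phi_n(H)\to H$ forces $\phi_n\to\mathrm{id}$ on the range of $H$; upgrading this to derivatives needs a choice of the sequences, e.g.\ mollifying as in the proof of Theorem 6. The a.e.\ nature of (H5) for $H,F$ enters only through this limit.

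\textbf{Steps 2 and 3.} Properties (H1), (H2), (H4) pass to $\phi_n(H)$ and $\psi_n(F)$ once $\phi_n,\psi_n$ are increasing with derivatives bounded above and below on compacts, uniformly in $n$, which again follows from $\phi_n'\to1$. Convexity in $p$ is assumed. Applying Proposition 3.1 then gives $S_{H}(\lambda)\circ S_F(\mu)=S_F(\mu)\circ S_H(\lambda)$. That limit passage uses the stability of viscosity solutions ([HT] Theorem 1.13) together with the uniform Lipschitz bounds from Proposition 1.1.
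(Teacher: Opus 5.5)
Your route is the paper's own: its proof says only ``similar to Theorem 6'', meaning the pairs $(\phi_n(H),\psi_n(F))$ are asserted to satisfy (H1)--(H5) and Proposition 3.1 is applied. Your Step 1 computation is correct, and it shows that this assertion is not automatic. The identity $\{\phi(H),\psi(F)\}=\phi'(H)\psi'(F)\{H,F\}$ behind the Barles--Tourin argument holds for the Poisson bracket. For (H5), however, the zeroth-order terms produce exactly the discrepancy you wrote down. So you have not missed an idea that the paper supplies; the paper's proof has the same hole.

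Your repair does not close it. It needs $\phi_n'\to1$ and $\psi_n'\to1$ locally uniformly on the ranges, and this is not a harmless normalization. The sequences are given, and this property contradicts the very property they are assumed to have. Suppose $H$ is not differentiable at $z_0$ and $c=H(z_0)$. Then $\phi_n(H)\in C^1$ forces $\phi_n'(c)=0$ for every $n$. Otherwise $\phi_n$ has a $C^1$ local inverse near $c$, and $H=\phi_n^{-1}\circ\phi_n(H)$ would be differentiable at $z_0$. Hence $D[\phi_n(H)](z_0)=0$, and if $F$ is differentiable at $z_0$,
\[
\{\phi_n(H),\psi_n(F)\}(z_0)=\psi_n'(F(z_0))\,\partial_uF(z_0)\,\phi_n(c).
\]
Even granting your normalization $\psi_n'\to1$ at $F(z_0)$, this tends to $c\,\partial_uF(z_0)$, which is generally nonzero. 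So the Jacobi bracket of the approximants does not tend to $0$ uniformly on any compact set containing such a $z_0$. The error term in Lemma 2.1 is needed uniformly at the uncontrolled points $(x,p,v_2)$ produced by the inf-convolution, so it cannot be absorbed. Re-choosing or mollifying $\phi_n$ cannot help: any $C^1$ function $\phi$ with $\phi'(c)\neq0$ at a kink value makes $\phi(H)$ non-$C^1$. Your Step 2 also rests on $\phi_n'\to1$. In addition, the ``$o(1)$'' version of Proposition 3.1 you invoke is not in the paper; it would need a quantitative rerun of Lemma 2.1 and Theorem 3. The discrepancy disappears only under extra structure, e.g.\ $\partial_uH=\partial_uF=0$, where the bracket is a derivation and you are back in the classical case. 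In the stated generality a genuinely new idea is needed.
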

The proof is similar to Theorem 6. We will give an example of this type of Hamiltonian.
\begin{example}
    Consider Hamiltonian $H(x,p,u)=a(x)|p|+\sin u$, where $a:\mathbb R^n \rightarrow \mathbb R$ is a $C^1$, Lipschitz function such that
    \begin{equation*}
        0<\alpha \leq a(x)\leq \beta \quad in \quad \mathbb R^n.
    \end{equation*}
    Pick $\phi_n(t)=t^{1+1/n}$, then $\phi_n(H)$ is $C^1$ and convex w.r.t $p$, and $\phi_n(H)$ converges to $H$ locally uniformly.
\end{example}
Moreover, we have the following theorem:
\begin{result}
    Assume that $H$ and $F$ locally Lipschitz satisfy (H1)-(H4). If $F$ is $C^1$ and if $H$ and $F$ satisfy (H5) in the almost everywhere sense, then $S_H$ and $S_F$ commute.
\end{result}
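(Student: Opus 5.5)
The plan is to reduce to Proposition 3.1 by regularizing only $H$, keeping $F$ fixed since it is already $C^1$. I take a standard mollifier $\rho^\delta$ in the variables $(x,p,u)$ and set $H^\delta := H * \rho^\delta$. Local Lipschitz continuity of $H$ gives $H^\delta \in C^\infty$ and $H^\delta \to H$ locally uniformly. Properties (H1)--(H4) pass to $H^\delta$ with uniform constants:
\begin{itemize}
\item convexity in $p$ and superlinearity (H2) are preserved by averaging;
\item the bounds in (H1) are preserved up to changing $R$ to $R+1$;
\item the $u$-Lipschitz bound (H4) is preserved because $\partial_u H^\delta = (\partial_u H)*\rho^\delta$ a.e.
\end{itemize}
Proposition 3.1 then gives commutation of $S_H$ and $S_F$, provided $(H^\delta,F)$ satisfies (H5) exactly, or at least up to an error that vanishes. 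The whole argument rests on that proviso.

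The key computation concerns the bracket in (H5). Write $\{G,F\}$ for the left-hand side of (H5) with $G$ in place of $H$. This expression is linear in $G$ and its first derivatives, with coefficients built from $F$, $D_xF$, $D_pF$, $\partial_u F$, $p$. So
\[
\{H^\delta,F\} = \{H,F\}*\rho^\delta + E^\delta,
\]
where $E^\delta$ is a commutator term. It collects expressions like
\[
\int \big(D_pF(z)-D_pF(z-w)\big)\, D_xH(z-w)\,\rho^\delta(w)\,dw ,
\]
and the analogous ones with $p$, $F$, $\partial_uF$ as coefficients. The first term vanishes identically because $\{H,F\}=0$ a.e. and $H$ is differentiable a.e. (Rademacher). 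Since $DH$ is locally bounded and the coefficients are continuous, $E^\delta \to 0$ locally uniformly. So (H5) holds only approximately, and Proposition 3.1 as stated does not apply directly.

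To handle this I would rerun the proof of Theorem 3 (and hence of Theorem 2) with $H^\delta$ in place of $H$, keeping track of the error. In Lemma 2.1(iii) the quantity $\bar Q(H^\delta)$ becomes $\epsilon t\, E^\delta(x,p,v_2) + o(\epsilon)$ instead of $o(\epsilon)$. Following the argument of Theorem 3, $v^\delta(x,t,\lambda)$ is then a sub/supersolution of
\[
\partial_\lambda \xi + tF(x,D\xi,\xi) = \pm t\,\eta_\delta ,
\]
where $\eta_\delta := \sup\{|E^\delta| : x\in B(0,R),\ |p|\le L,\ |u|\le M\}$. Here $L$ and $M$ come from the uniform Lipschitz and sup bounds on the solutions, which are independent of $\delta$ by the proof of Proposition 1.1. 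The comparison principle for such equations then gives
\[
\big|\, S_F(\mu)\circ S_{H^\delta}(\lambda)\phi - S_{H^\delta}(\lambda)\circ S_F(\mu)\phi \,\big| \le C(\lambda,\mu)\,\eta_\delta
\]
locally. Letting $\delta\to 0$ and using stability of viscosity solutions (\cite{HT} Theorem 1.13) gives $S_{H^\delta}\to S_H$ locally uniformly, and the commutation of $S_H$ and $S_F$ follows.

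The main obstacle is justifying that $E^\delta\to 0$ uniformly on the compact set where the gradients of the solutions live. Only $DH\in L^\infty_{loc}$ is available, not continuity of $DH$. The convergence comes from uniform continuity of $D_xF$, $D_pF$, $\partial_uF$, $F$ on compacts multiplied by the bounded $DH$; this is exactly where the hypothesis that $F$ is $C^1$ is needed. A secondary point is that Lemma 2.1 uses $C^1$ of both Hamiltonians only through Taylor expansions of $G$ and $F$. If $C^1$ of $F$ is ever insufficient there, I would additionally mollify $F$ and control the resulting commutator in the same way.
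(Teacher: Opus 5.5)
Your argument is correct at the paper's level of rigor, but it is organized differently from the paper's proof.

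\textbf{The paper's route.} The paper never changes the PDE. It keeps the nonsmooth $H$ in the equation for $v$ and mollifies only inside the estimate of $\bar Q(H)$ in Lemma 2.1, coupling the scales as $\delta=\epsilon^2$. This shows that $v$ itself solves $\partial_\lambda v+tF(x,Dv,v)=0$. An inf-convolution with $H^*$ is unavailable when $H$ is not $C^1$, so the paper recovers $\partial_\mu v+tH(x,Dv,v)=0$ differently: it uses the homogeneity Lemma 4.1, i.e.\ $tv_t-\lambda v_\lambda-\mu v_\mu=0$, combined with $v_t+\mu H+\lambda F=0$.

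\textbf{Your route.} You regularize the Hamiltonian in the equation itself. Both $H^\delta$ and $F$ are then $C^1$, so Theorem 3 can be rerun in both directions and no scaling lemma is needed. The cost is threefold: you carry the commutator $E^\delta$ as a source term through comparison, you need $\delta$-uniform Lipschitz bounds, and you need a final stability step.

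You should say explicitly that the second direction is also used. That is the inf-convolution with $(H^\delta)^*$, with error $-E^\delta$, where only the Taylor expansion of the $C^1$ function $F$ enters. With $v^\delta(\cdot,1,\lambda,\mu):=S_{\mu H^\delta+\lambda F}(1)\phi$, your displayed $\lambda$-equation alone only identifies $v^\delta(\cdot,1,\lambda,\mu)$, up to $O(\eta_\delta)$, with $S_F(\lambda)\circ S_{H^\delta}(\mu)\phi$, not with the other composition. Your commutator estimate is the same Friedrichs-type computation the paper writes as an integral against $\eta^\delta$. In both routes it is exactly where $F\in C^1$ enters.

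\textbf{What each route buys.} Your ordering of limits is more robust. With $\delta$ fixed while $\epsilon\to0$, the Taylor remainder of $H^\delta$ in $\bar Q(H^\delta)$ is a genuine $o(\epsilon)$. Under the paper's coupling, since $|D^2H^\delta|\lesssim\delta^{-1}$, the available bound on that remainder is only $\epsilon\,\omega_{DH^\delta}(C\epsilon)\lesssim\epsilon\min\{1,\epsilon/\delta\}$. This is merely $O(\epsilon)$ when $\delta=\epsilon^2$, while the $O(\delta)$ error from replacing $H$ by $H^\delta$ forces $\delta=o(\epsilon)$, so no coupling closes the estimate as written.

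In exchange, the paper's route gives the stronger intermediate fact that the original $v$ solves both multi-time equations. It also never needs Lemma 2.1 with $(H^\delta)^*$ as the inf-convolution kernel. That step strictly requires differentiability of $(H^\delta)^*$, which you can arrange, e.g.\ by adding a small multiple of $|p|^2$ whose bracket contribution is also small on compacts.
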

The proof is left in the next section.

In the next part, we will consider the problem in the general manifold. More details about this part can refer to \cite{SF}.

Assume that $M$ is a closed smooth manifold, Hamiltonian $H$ is a real-valued function defined on the jet bundle $\mathbb J^1(M,\mathbb R)$. 
\begin{definition}
    A Jacobi structure on a manifold $M$ is a bracket $\{\cdot,\cdot\}$ on $C^{\infty}(M)$ that is skew-symmetric, satisfies the Jacobi identity, and is local, in the sense that the support of $\{f, g\}$ is contained in the intersection of the supports of $f$ and $g$.
\end{definition}
We define the Jacobi structure for $\mathbb J^1(M,\mathbb R)$, with the following map:
\begin{equation}
    \Lambda ^\#: T^*M \mapsto TM, \quad \eta \mapsto \eta_p\partial_x-(p\eta_u+\eta_x)\partial_p-p\eta_p\partial_u
\end{equation}
and the Reeb field $Y_\alpha =\partial_u$. Define that $\Lambda(\eta,\xi)=\xi(\Lambda ^\#(\eta))$, we give the Jacobi bracket as stated:
\begin{equation}
    \{f,g\}:=\Lambda(df,dg)+(fdg-gdf)(Y_\alpha).
\end{equation}
We call $(C^{\infty}(M)),\{\cdot,\cdot\})$ the Jacobi algebra, and we say that $f$ and $g$ Jacobi commute if $\{f,g\}=0$ for every $(x,p,u)\in T^*M\times \mathbb R$. We modify the $|\cdot|:\mathbb R^n \rightarrow [0,\infty)$ in (H1)-(H3) to the norm $||\cdot||$ on the manifold $M$ at this part. Then we can define the Lax-Oleinik semi-group $S_H$ similarly.  
\begin{result}
    Assume that $H$ and $F$ satisfy (H1)-(H4), if $H$ and $F$ Jacobi commute, then $S_H$ and $S_F$ commute.
\end{result}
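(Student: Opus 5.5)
The plan is to reduce this manifold statement to the Euclidean argument behind Theorem 2, which rests on Theorem 3 and Theorem 4. I would proceed in three steps.

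\textbf{Step 1: identify the Jacobi bracket with the left-hand side of (H5).} In local jet coordinates $(x,p,u)$ on $\mathbb J^1(M,\mathbb R)$, substitute $\Lambda^\#$ and $Y_\alpha=\partial_u$ into $\{f,g\}=\Lambda(df,dg)+(f\,dg-g\,df)(Y_\alpha)$. Writing $df=f_x\,dx+f_p\,dp+f_u\,du$, we get
\[
\Lambda(df,dg)=dg\bigl(\Lambda^\#(df)\bigr)=g_x\cdot f_p-(pf_u+f_x)\cdot g_p-(p\cdot f_p)\,g_u ,
\]
and the Reeb term contributes $fg_u-gf_u$. Taking $f=F$, $g=H$ and collecting terms gives exactly the expression in (H5), up to a sign depending on the ordering convention. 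Hence $\{H,F\}=0$ everywhere is equivalent to (H5) holding in every chart. This step is a bookkeeping computation; the only care needed is matching sign conventions.

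\textbf{Step 2: globalize Theorem 3.} With $v(\cdot,t,\lambda)=S_{H+\lambda F}(t)u_0$ on $M$, I would rebuild Lemma 2.1 on $M$ by replacing the quadratic-type convolution $\epsilon t F^*\bigl(x,\tfrac{x-y}{\epsilon t},\cdot\bigr)$ with the implicit action of $F$ over time $\epsilon t$. Concretely, set $v^\epsilon=\inf_y\{v(y)+h^F_{y,v(y)}(x,\epsilon t)\}$, i.e. $v^\epsilon(\cdot,t,\lambda)=S_F(\epsilon t)[v(\cdot,t,\lambda)]$, using the Implicit Variational Principle (Proposition 1.2) and Theorem 1 on $M$.

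Since $M$ is compact, the minimizer $\bar y$ lies within distance $O(\epsilon)$ of $x$. Both points then sit in a single coordinate chart, and there the Taylor expansion of $\bar Q(G)$ from Section 2 goes through verbatim. The first-order term is $\epsilon t\{G,F\}$, which vanishes by Step 1, and the remainder is $o(\epsilon)$. Compactness also makes all the $o_{R,T,\Lambda}(\epsilon)$ bounds uniform, so the localization to $B(0,R)$ is no longer needed. Comparison on compact $M$, the analogue of Lemma A.2 in \cite{ishii2022hamilton}, then yields the sub- and supersolution inequalities in $\lambda$ exactly as in the proof of Theorem 3.

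\textbf{Step 3: conclude.} Theorem 4 and the Remark $S_{\mu H}(1)=S_H(\mu)$ carry over unchanged, and the chain of equalities in the proof of Theorem 2 gives $S_H(\mu)\circ S_F(\lambda)=S_F(\lambda)\circ S_H(\mu)$.

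\textbf{Main obstacle.} The hardest part is Step 2, specifically justifying that the chart-dependent quantities behave correctly. These include $D_xF$, the point $\bar y = x-\epsilon t D_pF$, and the fact that a superdifferential at $\bar y$ is transported to a covector at $x$. On a manifold these are tied to coordinates, and I must check that the error between different charts is $o(\epsilon)$. My first attempt would be to fix a finite atlas with a uniform Lebesgue number. Within one chart the identity $\{H,F\}=0$ holds in those coordinates because the Jacobi bracket is coordinate-invariant. The remainder terms are then controlled by the Lipschitz bounds on $v$ together with the uniform $C^1$ moduli of $H$ and $F$ on the compact set $\{\|p\|\le K\}$ of $\mathbb J^1$.
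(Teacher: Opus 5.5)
Your three-step outline matches the paper's own argument, which is only a sketch: Jacobi commutation is (H5) in jet coordinates, the proof of Theorem 3 is redone in local coordinates, and Theorem 4 plus the chain in the proof of Theorem 2 finish. The gap is in Step 2.

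\textbf{Step 2 does not go through verbatim.} The expansion of $\bar Q(G)$ in Lemma 2.1 is not a general fact about evolving $v$ by $F$ for time $\epsilon t$. It comes from differentiating the explicit frozen kernel $\Psi_\epsilon(x,y,t)=\epsilon tF^*\bigl(x,\frac{x-y}{\epsilon t},v(y,t,\lambda)\bigr)$. That differentiation is what produces $Dv^\epsilon=\epsilon tD_xF^*+D_pF^*$, $p=-D_y\Psi_\epsilon$, $\bar r=p\,(1-\epsilon t\,\partial_uF)$, $\bar y=x-\epsilon tD_pF$, and the formula for $\partial_tv^\epsilon$.

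Your $v^\epsilon=S_F(\epsilon t)[v(\cdot,t,\lambda)]$ equals $\inf_y h^F_{y,v(y)}(x,\epsilon t)$. Your formula's extra $v(y)$ double counts, since $h_{y,u}$ already contains $u$. For this $v^\epsilon$ none of those identities hold:
\begin{itemize}
\item $x$ and $u$ are no longer frozen;
\item $Dv^\epsilon$ is the terminal momentum of a minimizing curve;
\item $\partial_tv^\epsilon$ contains the derivative of $S_F(\epsilon t)$ with respect to its initial datum, applied to $\partial_tv$.
\end{itemize}

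The computation that fits your choice runs along the contact characteristic $(X,P,U)$ of $F$ on $[0,\epsilon t]$, from $(\bar y,Dv(\bar y),v(\bar y))$ to $(x,Dv^\epsilon,v^\epsilon)$. With $G=H+\lambda F$ it gives
\[
Q=G\bigl(X,P,U\bigr)(\epsilon t)-e^{-\int_0^{\epsilon t}\partial_uF}\,G\bigl(X,P,U\bigr)(0).
\]
(H5) says exactly that $\tau\mapsto e^{\int_0^\tau\partial_uF}\,G(X,P,U)(\tau)$ is constant, so $Q=0$. Making this rigorous requires three facts:
\begin{itemize}
\item minimizers in the implicit variational principle are $C^1$ solutions of the contact Hamiltonian system;
\item $h_{y,u}(x,s)$ is differentiable in $y$ and in the initial value $u$, with $\partial_uh=e^{-\int\partial_uF}$;
\item the covector obtained at $\bar y$ is $P(0)$.
\end{itemize}
None of these is in the paper. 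They are normally established under Tonelli-type assumptions ($C^2$ and strictly convex in $p$), which are stronger than (H1)--(H4) for $C^1$ Hamiltonians.

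\textbf{If you keep the frozen kernel in charts,} your ``main obstacle'' is misplaced. The Lebesgue-number argument only controls where $\bar y$ lies. The real difficulty is that $(x-y)/(\epsilon t)$ makes $v^\epsilon$ chart-dependent, while comparison on $M$ needs a single global approximate super- or subsolution. You must glue the chartwise versions, which differ by $O(\epsilon^2)$. One option is a min or max over charts with $O(\epsilon^2)$ penalizations near chart boundaries. You then have to check that the error stays $o(\epsilon)$.

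\textbf{Step 1.} Your own formula contradicts the claim that it gives ``exactly (H5) up to a sign''. With the paper's literal $\Lambda^\#$, the $u$-terms $-(p\cdot f_p)g_u-(p\cdot g_p)f_u$ are symmetric in $f,g$, so $\Lambda$ is not even skew. The sign of the $\partial_u$-component of $\Lambda^\#$ relative to the other components must be flipped, e.g.\ $\Lambda^\#(\eta)=-\eta_p\partial_x+(p\eta_u+\eta_x)\partial_p-(p\cdot\eta_p)\partial_u$. Then $\{F,H\}$ is minus the left-hand side of (H5). The paper glosses over this as well.
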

We can find that $H$ and $F$ Jacobi commute just means (H5) when $M=\mathbb R^n$. About the proof, we can modify the proof of Theorem 3 by using local coordinates. 

\section{Proof of Theorem 8}
We first modify the proof of Lemma 2.1, we claim that it still holds when (H5) is in the almost everywhere sense.
\begin{equation}
        \bar Q(H)=H(x,\bar r-\epsilon tD_xF(x,\bar r,v_2),v^{\epsilon}_1)-H(x-\epsilon tD_pF(x,\bar r,v_2),p,v_2)-\epsilon tH(\bar y,p,v_2)\frac{\partial F}{\partial u}(x,\bar r,v_2),
 \end{equation}
 Consider the mollifier $\eta \in C^\infty_c (\mathbb R^n \times \mathbb R^n \times \mathbb R)$, such that $\int_{\mathbb R^{2n+1}} \eta dm=1, \eta \geq 0,supp(\eta) \subset B(0,1)$. Let
 \begin{equation}
     \eta^\delta(x,p,u):=\delta^{-2n-1}\eta(\frac{x}{\delta},\frac{p}{\delta},\frac{u}{\delta}),
 \end{equation}
 and 
 \begin{equation}
     H^\delta(x,p,u):=(H*\eta^\delta)(x,p,u).
 \end{equation}
With the property of mollifier, we know that for any compact set $K$, there exists a constant $C>0$, such that
\begin{equation}
    ||H^\delta-H||_{L^\infty(K)}\leq C\delta,
\end{equation}
so we can rewrite that
\begin{equation}
    \bar Q(H)=H^\delta(x,\bar r-\epsilon tD_xF(x,p,v_2),v^{\epsilon}_1)-H^\delta(x-\epsilon tD_pF(x,p,v_2),p,v_2)-\epsilon tH^\delta(\bar y,p,v_2)\frac{\partial F}{\partial u}(x,\bar r,v_2)+O(\delta)
\end{equation}
Moreover, like Lemma 2.1, we have
\begin{align*}
    &H^\delta(x,\bar r-\epsilon tD_xF(x,p,v_2),v^{\epsilon}_1)-H^\delta(x-\epsilon tD_pF(x,p,v_2),p,v_2)-\epsilon tH^\delta(\bar y,p,v_2)\frac{\partial F}{\partial u}(x,\bar r,v_2)\\
    &=\epsilon t[D_x H^\delta(x,p,v_2) D_pF(x,p,v_2)- D_p H^\delta(x,p,v_2) D_xF(x,p,v_2) + p \cdot D_pF(x,p,v_2) \cdot \frac{\partial H^\delta}{\partial u}(x,p,v_2) \\
    & - p \cdot D_pH^\delta(x,p,v_2) \cdot \frac{\partial F}{\partial u}(x,p,v_2) + \frac{\partial F}{\partial u}(x,p,v_2) H^\delta(x,p,v_2)-\frac{\partial H^\delta}{\partial u}(x,p,v_2) F(x,p,v_2)]+o_{R,T,\Lambda}(\epsilon)\\
    &=\int _{\mathbb R^{2n+1}}\epsilon t[D_x H(y,q,v) D_pF(x,p,v_2)- D_p H(y,q,v) D_xF(x,p,v_2) + p \cdot D_pF(x,p,v_2) \cdot \frac{\partial H}{\partial u}(y,q,v) \\
    & - p \cdot D_pH(y,q,v) \cdot \frac{\partial F}{\partial u}(x,p,v_2) + \frac{\partial F}{\partial u}(x,p,v_2) H(y,q,v)-\frac{\partial H}{\partial u}(y,q,v) F(x,p,v_2)]\\
    &\eta ^\delta(x-y,p-q,v_2-v)dydqdv+o_{R,T,\Lambda}(\epsilon)\\
    &=\epsilon t \omega(\delta)+o_{R,T,\Lambda}(\epsilon),
\end{align*}
where $\omega$ is a positive function satisfying $\omega(\epsilon) \rightarrow 0$ when $\epsilon \rightarrow 0$.
Thus we have 
\begin{equation}
    \bar Q(H)=\epsilon t\omega(\delta)+O(\delta)+o_{R,T,\Lambda}(\epsilon),
\end{equation}
pick $\delta=\epsilon^2$, we know that $\bar Q(H)=o_{R,T,\Lambda}^\prime(\epsilon)$.
So $v$ is the viscosity solution of
\begin{equation}
    \frac{\partial v}{\partial \lambda}+tF(x,Dv,v)=0 \quad in \quad\mathbb R^n \times (0,\infty),\label{7}
\end{equation}
but the same argument does not hold for
\begin{equation}
    \frac{\partial v}{\partial \lambda}+tH(x,Dv,v)=0 \quad in \quad\mathbb R^n \times (0,\infty).\label{8}
\end{equation}
We will give another lemma.
\begin{lemma}
    For any $k>0$, we have 
    \begin{equation}
        v(x,t,\lambda,\mu)=v(x,\frac{t}{k},k\lambda,k\mu),
    \end{equation}
    and $v$ is the viscosity solution of 
    \begin{equation}
        t\frac{\partial v}{\partial t}-\lambda\frac{\partial v}{\partial \lambda}-\mu\frac{\partial v}{\partial \mu}=0 \quad in \quad \mathbb R^n\times(0,\infty)^3, \label{6}
    \end{equation}
\end{lemma}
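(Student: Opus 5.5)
Throughout, I read $v(x,t,\lambda,\mu)$ as the unique Lipschitz viscosity solution at time $t$ of (HJ) with Hamiltonian $G_{\lambda,\mu}=\mu H+\lambda F$. That is, $v(x,t,\lambda,\mu)=S_{\mu H+\lambda F}(t)[u_0(x)]$, the two-parameter version of the $v$ of Theorem 3 (which corresponds to $\mu=1$). The plan has two steps: first prove the scaling identity by the uniqueness part of Proposition 1.1 (equivalently Theorem 1), then obtain the transport equation (\ref{6}) by "differentiating'' the identity in $k$ at $k=1$ in the viscosity sense.

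\emph{Step 1: the scaling identity.} Fix $k>0$, $\lambda,\mu\geq 0$, and set $w(x,s):=v(x,s/k,k\lambda,k\mu)$. Since $v(\cdot,\cdot,k\lambda,k\mu)$ solves $u_t+k\mu H(x,Du,u)+k\lambda F(x,Du,u)=0$, I will check directly on test functions that $w$ solves $w_s+\mu H(x,Dw,w)+\lambda F(x,Dw,w)=0$. Indeed, if $\phi$ touches $w$ from above at $(x_0,s_0)$, then $\tilde\phi(x,t):=\phi(x,kt)$ touches $v(\cdot,\cdot,k\lambda,k\mu)$ from above at $(x_0,s_0/k)$. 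Moreover $\tilde\phi_t=k\phi_s$ and $D\tilde\phi=D\phi$, so the subsolution inequality for $v$ is
\[
k\phi_s+k(\mu H+\lambda F)(x_0,D\phi,\phi)\leq 0,
\]
and dividing by $k>0$ gives the subsolution inequality for $w$. The supersolution case is identical. The initial datum is $w(x,0)=u_0(x)$ and $w$ is Lipschitz. Since $\mu H+\lambda F$ again satisfies (H1)--(H4) (a nonnegative combination preserves convexity, the bounds and the Lipschitz property in $u$, and superlinearity as long as $(\lambda,\mu)\neq(0,0)$), uniqueness gives $w=v(\cdot,\cdot,\lambda,\mu)$. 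Evaluating at $s=t$ gives $v(x,t,\lambda,\mu)=v(x,t/k,k\lambda,k\mu)$. For $(\lambda,\mu)=(0,0)$ both sides equal $u_0$, so the identity holds there trivially.

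\emph{Step 2: the transport equation.} By Step 1, for fixed $x$ the function $v$ is constant along the curves
\[
k\mapsto c(k):=(t_0/k,\;k\lambda_0,\;k\mu_0),\qquad c(1)=(t_0,\lambda_0,\mu_0).
\]
Let $\varphi\in C^1$ in $(x,t,\lambda,\mu)$ be such that $v-\varphi$ attains a local maximum at $(x_0,t_0,\lambda_0,\mu_0)$ with $t_0,\lambda_0,\mu_0>0$. Then $k\mapsto v(x_0,c(k))-\varphi(x_0,c(k))$ has a local maximum at $k=1$. Because $v(x_0,c(k))$ does not depend on $k$, the map $k\mapsto\varphi(x_0,c(k))$ has a local minimum at $k=1$, and its derivative there vanishes:
\[
-t_0\varphi_t+\lambda_0\varphi_\lambda+\mu_0\varphi_\mu=0 .
\]
This gives the subsolution inequality for (\ref{6}) with equality; the same computation at a local minimum gives the supersolution inequality. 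Hence $v$ is a viscosity solution of (\ref{6}) in $\mathbb R^n\times(0,\infty)^3$. Note that the flow moves only the variables $(t,\lambda,\mu)$ and keeps $x$ fixed, so the $x$-dependence of the test function plays no role.

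The main point needing care is Step 1: the scaling must be justified at the level of viscosity solutions and not only formally. The test-function rescaling above settles this, together with the check that $\mu H+\lambda F$ satisfies the hypotheses needed for uniqueness. Step 2 is essentially automatic once invariance along a smooth curve through every interior point is established. No regularity of $v$ in $(\lambda,\mu)$ is needed, which is exactly why this lemma can replace the unavailable equation (\ref{8}) later.
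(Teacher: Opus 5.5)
Your proof is correct and follows the same route as the paper. You obtain the scaling identity from uniqueness after rescaling time in the test function, which you spell out where the paper only says ``it is easy to check'', and you get the transport equation by observing that the test function restricted to the invariance curve through the touching point has an extremum at $k=1$; your parametrization $(t_0/k,k\lambda_0,k\mu_0)$ is the paper's $(kt_0,\lambda_0/k,\mu_0/k)$ under $k\mapsto 1/k$, so the first-order condition agrees up to sign.
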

\begin{proof}
    It is easy to check that $v(x,\frac{t}{k},k\lambda,k\mu)$ is also the viscosity solution of 
    \begin{equation}
        \frac{\partial v}{\partial t}+\mu H(x,Dv,v)+\lambda F(x,Dv,v)=0 \quad in \quad \mathbb R^n \times (0,\infty),\label{11}
    \end{equation}
    due to the uniqueness of viscosity solution we deduce that $v(x,t,\lambda,\mu)=v(x,\frac{t}{k},k\lambda,k\mu)$.
    Suppose that $\phi \in C^1(\mathbb R^n \times (0,\infty)^3)$ such that $v-\phi$ attains its maximum at the point $(x_0,t_0,\lambda_0,\mu_0)$ and $v-\phi=0$ at this point. So $v(x,\frac{t}{k},k\lambda,k\mu)-\phi(x,t,\lambda,\mu)$ attains its maximum at $(x_0,t_0,\lambda_0,\mu_0)$, and $\phi(x_0,kt_0,\frac{\lambda_0}{k},\frac{\mu_0}{k})\geq \phi(x_0,t_0,\lambda_0,\mu_0)$ for any $k>0$. Consider the function $f(k):=\phi(x_0,kt_0,\frac{\lambda_0}{k},\frac{\mu_0}{k})$, thus $f$ attains its minimum at 1, $f^\prime(1)=0$ is what we want for the subsolution test. Similarly, we can check that $v$ is the viscosity supersolution of (\ref{6}).
\end{proof}
Therefore combining (\ref{7})(\ref{6})(\ref{11}), we can deduce that $v$ is the viscosity solution of (\ref{8}). Then we know that  $S_H$ and $S_F$ commute with the same argument.

\section*{Acknowledgements}

I am grateful to Dr. Panrui Ni for offering this idea and all the support. I am also grateful to the University of Science and Technology of China for providing me with the fellowship to visit the University of Tokyo.

\section*{Declarations}

\noindent {\bf Conflict of interest statement:} The author states that there is no conflict of interest.

\medskip

\noindent {\bf Data availability statement:} Data sharing not applicable to this article as no datasets were generated or analysed during the current study.

\bibliography{references}   % 注意不要加 .bib 后缀

\end{document}